\def\al{{\alpha}}\def\be{{\beta}}\def\de{{\delta}}
\def\ep{{\epsilon}}\def\ga{{\gamma}}\def\ka{{\kappa}}
\def\la{{\lambda}}\def\si{{\sigma}}
\def\ze{{\zeta}}
\def\th{{\theta}}
\def\<{\left<}\def\>{\right>}\def\({\left(}\def\){\right)}
\font\tenmsbm=msbm10\textfont
\font\sevenmsbm=msbm7
\newcommand{\blue}[1]{\textcolor{blue}{#1}}
\def\bb#1{{\fam\msbmfam #1}}
\def\EE{\bb E}
\def\LL{\bb L}\def\NN{\bb N}\def\PP{\bb P}
\def\RR{\bb R}
\def\cF{{\cal F}}
\def\cH{{\cal H}}
\def\cM{{\cal M}}
\def\cU{{\cal U}}
\DeclareMathSymbol{\widehatsym}{\mathord}{largesymbols}{"62}
\def\*#1{\mathbf{#1}}
\newcommand{\tr}{\operatorname{tr}}
\renewcommand{\bar}{\overline}
\renewcommand{\tilde}{\widetilde}
\renewcommand{\phi}{\varphi}
\renewcommand{\qed}{\hfill \ensuremath{\Box}}
\renewcommand\d[1]{\ensuremath{%
  \;\mathrm{d}#1\@ifnextchar\d{\!}{}}}
\theoremstyle{plain}
\newtheorem{thm}{Theorem}[section]
\theoremstyle{definition}
\newtheorem{exmp}[thm]{Example}
\newcommand{\beq}{\begin{equation}}
\newcommand{\eeq}{\end{equation}}
\definecolor{c}{rgb}{0.9,0.3,0.1}
\definecolor{b}{rgb}{0.1,0.3,0.9}
\newtheorem{remark}{Remark}[section]
\newtheorem{lemma}{Lemma}[section]
\newtheorem{theorem}{Theorem}[section]
\newtheorem{definition}{Definition}[section]
\newtheorem{hypothesis}{Hypothesis}[section]
\newtheorem{Problem}{Problem}[section]
\renewcommand{\theequation}{\arabic{section}.\arabic{equation}}
\def\al{{\alpha}}\def\be{{\beta}}\def\de{{\delta}}
\def\ep{{\epsilon}}
\def\ga{{\gamma}}\def\la{{\lambda}}
\def\si{{\sigma}}\def\th{{\theta}}\def\ze{{\zeta}}
\def\<{\left<}\def\>{\right>}\def\({\left(}\def\){\right)}
\font\tenmsbm=msbm10\textfont
\font\sevenmsbm=msbm7
\def\bb#1{{\fam\msbmfam #1}}
\def\EE{\bb E}\def\NN{\bb N}\def\PP{\bb P}
\def\RR{\bb R}
\def\cF{{\cal F}}
\def\cH{{\cal H}}\def\cM{{\cal M}}
\def \cR{{\cal R}}
\numberwithin{equation}{section}
\renewenvironment{proof}[1][\proofname]{%
  \par\pushQED{\qed}%
  \normalfont \topsep6\p@\@plus6\p@\relax
  \trivlist
  \item[\hskip\labelsep\bfseries #1\@addpunct{.}]\ignorespaces
}{%
  \popQED\endtrivlist\@endpefalse
}
\begin{document}

\title{\large \bf Stochastic maximum principle for weighted mean-field system with jumps}
\author{Yanyan Tang\thanks{Department of Mathematics, Southern University of Science and Technology, Shenzhen, Guangdong, 518055, China ({\tt 12131233@mail.sustech.edu.cn}). } \; and 
Jie Xiong\thanks{ Department of Mathematics and SUSTech International Center for Mathematics, Southern University of Science and Technology, Shenzhen, Guangdong, 518055, China ({\tt xiongj@sustech.edu.cn}).
This author is supported by  National Key R\&D Program of China grant 2022YFA1006102, and
National Natural Science Foundation of China Grant 12471418.}
}
\date{}
\maketitle
 \bigskip

\noindent \textbf{Abstract.}
In this article, we consider a weighted mean-field control problem with jump-diffusion as its state process. The main difficulty
is from the non-Lipschitz property of the coefficients. We overcome this difficulty by an $L_{p,q}$-estimate of the solution processes 
with a suitably chosen $p$ and $q$. Convex {\blue {perturbation}} method combining with the aforementioned $L_{p,q}$-estimation method
 is utilized to derive the stochastic maximum principle for this control problem. A sufficient condition for the optimality
 is also given. Two motivating and one solvable examples are also presented.

\bigskip

\noindent \textbf{Keyword.}
 McKean-Vlasov equation, stochastic maximum principle,  jump diffusion process, weighted mean-field control.\\
\noindent \textbf{AMS subject classifications.}
60H10, 93E20, 93D20, 93E03, 60H30

\section{Introduction} \label{sec1}
\setcounter{equation}{0}
\renewcommand{\theequation}{\thesection.\arabic{equation}}

 The following stochastic system with weighted measures was studied by Kurtz and Xiong \cite{KX}: for $t\in [0,T]$, $i=1,2,\cdots$,
\begin{equation}\label{eq0121a}
\left\{\begin{array}{ccl}
dX_i(t)&=&b(X_i(t),\mu(t))dt+\si(X_i(t),\mu(t))dW_i(t),\\
dA_i(t)&=&A_i(t)\(\al(X_i(t),\mu(t))dt+\be(X_i(t),\mu(t))dW_i(t)\),\\
\mu(t)&=&\lim_{n\to\infty}\frac1n\sum^n_{i=1}A_i(t)\de_{X_i(t)},\\
X_i(0)&=&x_i.\; A_i(0)=a_i,
\end{array}
\right.\end{equation}
where $W_i$, $i=1,2,\cdots$ are independent Brownian motions.
 It was proved there that the system (\ref{eq0121a}) has a unique solution.  Further, $\mu(t)$ is characterized uniquely by a nonlinear partial differential equation. $\mu(t)$ can also be determined by the following McKean-Vlasov equation:
\begin{equation}\label{eq0121b}
\left\{\begin{array}{ccl}
dX(t)&=&b(X(t),\mu(t))dt+\si(X(t),\mu(t))dW(t),\\
dA(t)&=&A(t)\big(\al(X(t),\mu(t))dt+\be(X(t),\mu(t))dW(t)\big),\\
\mu(t)&=&\EE\(A(t)\de_{X(t)}\),\\
X(0)&=&x,\;A(0)=a,
\end{array}
\right.\end{equation}
where $W(t)$ is a Brownian motion.

In this article, we will assume that each individual can impose a suitable control to minimize certain cost function (or, equivalently, to maximize certain utility function). For example, an investor can adjust his position to maximize the profit from an investment portfolio (a detailed example will be provided in Section 6). Such control process will affect the dynamic of the  equations driving the state process $X(t)$ and the weight process $A(t)$. In this case, the state equation of the system is as follows:
\begin{equation}\label{eq0123a}
\left\{\begin{array}{ccl}
dX(t)&=&b(X(t),\mu(t),u(t))dt+\si(X(t),\mu(t),u(t))dW(t),\\
dA(t)&=&A(t)\big(\al(X(t),\mu(t),u(t))dt+\be(X(t),\mu(t),u(t))dW(t)\big),\\
\mu(t)&=&\EE\(A(t)\de_{X(t)}\),\\
X(0)&=&x,\;A(0)=a,
\end{array}
\right.\end{equation}
where $u(t)$ is the control  process to be chosen to minimize a cost functional.

{\blue{The main difficulty in proving the uniqueness of solution
to (\ref{eq0121a}) by \cite{KX} comes from the non-Lipschitz continuity of the coefficients such as $(a,\mu)\mapsto a\al(x,\mu)$.
If the coefficients are Lipschitz, then we can estimate function
\[f(t)\equiv\EE\sup_{s\le t}\(|\de X(s)|^2+|\de A(s)|^2\)\]
to obtain an inequality of the form $f(t)\le K\int^t_0 f(s)ds$, and the uniqueness follows from Gronwall's inequality, where
$\de Y=Y_1-Y_2$ for $Y=X,\ A$, and $(X_i,A_i)$, $i=1,\ 2$ are two solutions to (\ref{eq0121b}). The non-Lipschitz property mentioned above prevents us to implement the above procedure.  A stopping time
\[\tau=\inf\left\{t>0:\;\lim_{n\to\infty}\frac1n\sum^n_{i=1}A_i(t)^2>M\right\}\]
was used in \cite{KX} to overcome this difficulty. However, for the control problem we are dealing with, we will need to consider a forward backward stochastic differential equation (FBSDE) system for which stopping argument is not convenient as the terminal value of the backward component depends on that of the forward one. Namely, the equation has to be solved upto the terminal time $T$, and cannot be stopped at any earlier time.  The non-lipschitz property also causes a problem when we derive some estimates needed in the expansion of the perturbation of the cost functional.}} 

{\blue{In this article, we define
\[f(t)\equiv\EE\sup_{s\le t}|\de X(s)|^p+\(\EE\sup_{s\le t}|\de A(s)|^q\)^{p/q}\]
for suitably chosen parameters $p$ and $q$. The estimation of $f(t)$ above is called the $L_{p,q}$-estimate of the process $(\de X,\de A)$. 
When the partial dirivative of the coefficients $b$ and $\si$  with respect to the measure are bounded functions, 
we studied this control problem in \cite{T-X-2023}.  In that case, we take $p=2$ and  $q=1$. For our current setting of the partial 
derivatives  of $b$ and $\si$ with respect to the measure being of linear growth, we have to choose both parameters suitably. 
As an application of our $L_{p,q}$-method, we can solve the uniqueness problem of \cite{KX} while relaxing the boundedness condition there. 
We can also extend the results of \cite{T-X-2023} when the coefficients $b$ and $\si$, as well as the partial derivatives with respect to
 the measure, are not bounded. }}

Stochastic maximum principle (SMP) is a fundamental tool in the field of stochastic optimal control, providing necessary conditions for the optimality of controls in stochastic systems. The mean field stochastic differential equation (MFSDE) in the form of the McKean-Vlasov equation is a useful model
 for investigating the collective behavior arising from interaction among individuals, which is very popular in various fields. In the last decade, since Buchdahn et al. \cite{BDLP2009,2009+} and Carmona and Delarue \cite{{CD2013-1},{CD2013-2},{CD2015}} introduced the mean-field backward SDE (BSDE) and mean-field FBSDE, optimal control problems for mean-field systems have become a popular topic; see, for example,
\cite{ABC2019,{CD2018},{CDL2016},{CZ2016},{L-2012},WangWu2022,{Yong2013},{Z-S-X-2018}}. 
The recent seminal paper of Lasry and Lions \cite{L-L-2007} on mean-field games and their applications in economics, finance and game theory, injects a new dynamism into this research topic and opens up new avenues of applications that have attracted considerable attention from researchers. 
  
Jump diffusion models provide a natural and valuable extension of It\^o's diffusion and have attracted considerable attention due to their extensive applications, particularly in economics and finance. Several empirical studies provide evidence for the existence of jumps in stock, bond and foreign exchange markets.  In fact, many papers on {\blue {the stochastic maximum principle } }in stochastic control theory have considered the state equation  with jumps. Saksonov \cite{S-M-1985} and Tang and Li \cite{T-L-1994} proved the necessary optimality conditions for linear control problem of jump processes.  These papers impose $L^p$  boundedness conditions on the control. Abel \cite{A-C-2002} obtained an SMP for a jump model. He gives both necessary and sufficient  optimality conditions, while not imposing any $L^p$-boundness on the control. In \cite{A-C-2002} and \cite{F-N-C-2004} some applications in finance are treated. In the research by Shen and Siu \cite{S-S-2013}, they outlined the essential and comprehensive optimality conditions for a mean-field model that incorporates randomness through both Brownian motion and Poisson jump processes.  For a mean-field jump-diffusion stochastic delay system,  Zhang, et al. \cite{Z-S-X-2018} obtained a global form SMP for a switching mean-field model in the Markov regime that is driven by Brownian motion and Poisson jumps.  Using the maximum principle, Zhang  \cite{Z-2021} formulates optimality conditions for stochastic control problems associated with mean-field jump-diffusion systems characterised by both moving averages and pointwise delays. For a partially observed stochastic mean-field system with random jumps, Chen and Wu \cite{T-W-2023} studied progressive optimal control. For more details on mean-field random jump control systems and their applications, see \cite{H-M-2015}, \cite{H-M-2019} and \cite{T-M-2020}.

Based on the above consideration, we will study the control problem with the weighted mean-field state equation 
(\ref{eq0613a}) below driven by a Poisson random measure, and the cost functional given by (\ref{eq0613c1}). The goal is to derive an SMP for this problem. 
The article  is  organized as follows: Formulating the problem and presenting the main results will be the focus of Section \ref{sec2}. The existence and uniqueness of  the solution to the  system is discussed in Section \ref{sec3}.    The proof of the necessary condition of the optimality  is given in Section \ref{sec4}, while that of the sufficient condition is presented in Section \ref{sec5}. In Section 6, two motivating  and one solvable examples are introduced. We conclude this article in Section \ref{sec7}.
 Throughout the article, we will use $K$ to represent a constant whose value could be different in different places. We will omit the time parameter in complicated integrals or differentials (we omit $s-$ or $t-$ when the integral or differentials is with respect to the jump measure).

\section{Problem formulation and main results}\label{sec2}
\setcounter{equation}{0}
\renewcommand{\theequation}{\thesection.\arabic{equation}}

Let $(\Omega, \mathcal{F}, \mathbb{P})$ be a probability space with a filtration $(\mathcal{F}_{t})_{t\geq0}$ which satisfies the usual conditions. In this space, there is an $\mathbb{R}^n$-valued standard Brownian motion denoted by $W(\cdot)$ and an independent Poisson random measure
$N(dt,dz)$ with intensity  $\nu(dz)$ which is a $\sigma$-finite measure on a measurable space $(U_0,\cU_0)$. We write $\widetilde{N}(dt,dz)=N(dt,dz)-\nu(dz) dt$ for the compensated jump martingale random measure.

Consider the  McKean-Vlasov jump dynamics systems
\begin{equation}\label{eq0613a}
\left\{\begin{array}{ccl}
dX(t)&=&b(X(t),\mu^{X,A}(t),u(t))dt+\si(X(t),\mu^{X,A}(t),u(t))dW(t)\\
&&+\int_{U_0}\eta\(X(t-),u(t-),z\)\widetilde{N}(dt,dz),\\
dA(t)&=&A(t)\big(\al(X(t),\mu^{X,A}(t),u(t))dt+\be^\top(X(t),\mu^{X,A}(t),u(t))dW(t)\\
&&{\blue{+\int_{U_0}\ga(X(t-),u(t-),z)\widetilde{N}(dt,dz)}}\big),\\
X(0)&=&x,\ A(0)=a,\ t\in[0,T],
\end{array}\right.
\end{equation}
where the notation {\blue{$\top$}}  stands for the transpose of a vector or a matrix, {\blue {$u(\cdot)$ is the control process assumed to be   $\{\mathcal{F}_t\}_{t\geq 0}$-adapted and} take values in a given open convex set 
$U \subset \mathbb{R}^k$ with the condition
\begin {eqnarray*} \label{uu}
\mathbb{E}\big(\int_0^T |u(t)|^2 dt\big) < \infty.
\end {eqnarray*} 
The collection of all such control processes is denoted as $\mathcal{U}$.     Let $\cM_F(\RR^d)$ be the collection of all finite Borel measures on $\RR^d$.  Let  $\mu^{X,A}(t)\in\cM_F(\RR^d)$ be  the weighted measure given by 
\[\<\mu^{X,A}(t),f\>=\EE\(A(t)f(X(t))\), \qquad\forall f\in C_b(\RR^d).\]
The coefficients $(b,\sigma,\al, \be): \RR^d\times\cM_F(\RR^d)\times U\rightarrow\RR^{d+d\times n+1+n}$  
and $(\eta,{\blue{\ga}}):\RR^d\times U\times U_0\rightarrow\RR^{d+1}$ are given mappings.
{\blue{\begin{remark}
  The coefficients in the jump terms can also depend on the weighted measure $\mu$.  We dropped this dependence only for notational and computational  simplicity.
\end{remark}}}

 We now introduce some notation. Let $(\Omega',\mathscr{F}',\PP')$ be an independent copy of the probability space $(\Omega,\mathscr{F},\PP)$,
 $\EE'(\cdot)=\int_{\Omega'}(\cdot)d{\PP'}$ represents the expectation taken  in the product probability space $(\Omega\times\Omega,\mathscr{F}\otimes\mathscr{F}',\PP\otimes\PP')$ with respect to $\PP'$.

Let $\LL_1(\RR^d)=:\{f:|f(x)|\leq 1+|x|, \forall x\in\RR^d\} $.  {\blue{Sometimes,  we  drop 
the superscript  $X,A$ of $\mu^{X,A}_t$ for simplicity of notation.}}
\begin{definition}\label{DE2HA}
   For $\mu_1,\mu_2\in\mathcal{M}_F(\RR^d),$ the Wasserstein metric is defined by 
\[\rho(\mu_1,\mu_2)=\sup_{f\in\LL_{Lip}(\RR^d)}\{|\<\mu_1,f\>-\<\mu_2,f\>| \},\]
where $$\LL_{Lip}(\RR^d)=\{f\in\LL_1(\RR^d):\; |f(x)-f(y)|\leq|x-y|,\forall x, y\in \RR^d\},$$  
and $\<\mu,f\>$ stands for the integral of the function $f$ with respect to the measure $\mu$.
\end{definition}

Let $\LL(\RR^d)=:\{f:\;\exists K>0,\; |f(x)|\leq K(1+|x|), \forall x\in\RR^d\} $. 
\begin{definition}\label{def2.2}
Suppose $f:\mathcal{M}_F(\RR^d)\to\RR$. We say that $f\in C^1(\mathcal{M}_F(\RR^d))$ if there exists $h(\mu,\cdot)\in \LL(\RR^d)$ such that for $\ep\to 0$,
\[f(\mu+\ep\nu)-f(\mu)=\<\nu,h(\mu;\cdot)\>\ep+o(\ep).\]
We denote $h(\mu;x)=f_\mu(\mu;x)$. 
\end{definition}

Next, we make the following assumptions. We denote $\|\mu\|\equiv \sup_{f\in\LL_1(\RR^d)}\int_{\RR^d} f\mu(dx)$.
\begin{hypothesis}\label{hyp}
The coefficients  $b,\sigma,\al, \be,\eta$ are measurable in all variables. Furthermore 
 \begin{enumerate}[(1)]
\item $b,\si\in C^1(\RR^d\times\cM_F(\RR^d)\times U)$ with bounded partial derivatives in $x,u$. Furthermore, there exists a constant $K$ such that for all  $ x,\ x',\ y\in\RR^d,\ \mu,\ \mu_1,\ \mu_2\in \cM_F(\RR^d),\ u\in  U$ and  $\phi=b,\ \si$,
\begin{equation}\label{condition1}
\left\{\begin{array}{ccl}
&&|\phi(x,\mu_1,u)-\phi(y,\mu_2,u)|\leq K(|x-y|+\rho(\mu_1,\mu_2)), \\
&&|\phi_\mu(x,\mu,u; x')|\leq K(1+|x'|){\blue{,}}\\
&&|\phi(0,\mu,u)|\leq K\(1+\|\mu\|\).\\
\end{array}\right.\end{equation}
\item\label{2}For any $z\in U_0$, $\eta(x, y,z)$ is differentiable with respect to $(x, u)$. And for all $ x, y\in\RR^d, u\in  U, \  z\in U_0\  and \   p\geq2$, we have
\begin{equation}\label{condition2}
\left\{\begin{array}{ccl} 
&&\int_{U_0}|\eta(x, u,z)-\eta(y, u,z)|^p\nu(dz)\leq K|x-y|^p, \\
&&\int_{U_0}|\eta(0,  u,z)|^p\nu(dz)\leq K,\\
&&\int_{U_0}|\eta_h(x,u,z)|^p\nu(dz)\leq K,   \ \ for\  h:=x,u.
\end{array}\right.\end{equation}
\item\label{3} $\al,\be\in C^1(\RR^d\times \cM_F(\RR^d)\times U)$ are bounded and  with bounded partial derivatives.

\item\label{4} For any $z\in U_0$, $\ga(x, y,z)$ is differentiable with respect to $(x, u)$.  And   {\blue { the   processes  $1+\gamma $   are a.s. a.e. uniformly positive, i.e. there exist a constant $k$ such that $1+\gamma\geq k>0 $. Furthermore,  there exists constants $K$   such that for all $ x, y\in\RR^d, u\in  U, \  z\in U_0$  and $ p\geq1$,
\begin{equation}\label{condition3}
\left\{\begin{array}{ccl} 
&&\int_{U_0}|\ga(x,  u,z)-\ga(y, u,z)|^p\nu(dz)\leq K|x-y|^p, \\
&&  \int_{U_0}|\ga(x, u,z)|^p\nu(dz)\leq K,\\
&&\int_{U_0}|\ga_h(x, u,z)|^p\nu(dz)\leq K,   \ \ for\  h:=x,u.
\end{array}\right.\end{equation}}}
\item $\phi_\mu(x,\mu,u; x')$ is differentiable in $x'$ with bounded derivative for $\phi=b,\sigma,\al,\be$. We denote the partial derivative of  $\phi_\mu(x,\mu,u;x')$ with respect to $x'$ by $\phi_{\mu,1}(x,\mu,u;x')$.  
\end{enumerate}
\end{hypothesis}
 Under  Hypothesis \ref{hyp}  the system (\ref{eq0613a}) admits a unique solution for any $u(\cdot)\in\cU$, which will  be proved  in Section 3.
 
We will take the cost function of the form 
\begin{equation}\label{eq0613c1}
J(u)=\EE\(\int^T_0f\big(X(t),A(t),\mu^{X,A}(t),u(t)\big)dt+\Phi\big(X(T),A(T)\big)\),
\end{equation}
where $f:\RR^d\times\RR\times\mathcal{M}_F(\RR^d)\times U\rightarrow\RR$, and $\Phi:\RR^d\times\RR\rightarrow\RR.$ 
\begin{hypothesis}\label{hyp1}

\begin{enumerate}[(a)]
\item\label{a} $f(x,a,\mu,u) $ is differentiable with respect to $(x,a,\mu,u)$ and the derivatives are Lipschitz continuous.
\item\label{b} $\Phi(x,a)$ is  differentiable with respect to $(x,a)$ and the derivatives are Lipschitz continuous.
\end{enumerate}
\end{hypothesis}
The stochastic optimal control problem under consideration is stated as follows.
\begin{Problem}\label{prob}  Find  a control {\blue{ $\bar{u}(\cdot)$}} belonging to the set $\mathcal{U}$  such that
\[J(\bar{u}(\cdot))=\min_{u(\cdot)\in\cU}J(u(\cdot)).\]
{\blue{ The process  $\bar{u}(\cdot) \in \cU$  is called an optimal control of Problem \ref{prob} and we denote $(\bar{X}(\cdot), \bar{A}(\cdot))$ to be the corresponding   state process, which is the solution to equation (\ref{eq0613a})  with $u(\cdot)=\bar{u}(\cdot)$.}}
  \end{Problem}

Finally, we proceed to presenting the main results of this paper. We define the Hamiltonian function $\cH:\RR^{d+1}\times \cM_F(\RR^d)\times U \times\RR^{1+n+d+d\times n}\times\cR\rightarrow\RR$ as follows:
\begin{eqnarray}\label{eq0611H}
&&\cH\big(X,A,\mu, u, p,q, {\blue{r(\cdot)}},P,Q,R(\cdot)\big)\nonumber\\
&\equiv&A\Big(p\al(X,\mu,u)+q^\top\be(X,\mu,u)+{\blue{\int_{U_0}r(z)\ga(X, u, z)\nu(dz)}}\Big)+P^\top b(X,\mu,u)\nonumber\\
&&+\tr\big[Q^\top \si(X,\mu,u)\big]+f(X,A,\mu,u)+\int_{U_0}R(z)^\top \eta(X, u,z)\nu(dz).
\end{eqnarray}
Here, $\mathcal{R}=L^p(U_0,\cU_0,\nu;\RR^d)$. Under Hypotheses \ref{hyp} and \ref{hyp1}, it is clear that the Hamiltonian $\mathcal{H}$ exhibits continuous differentiability with respect to $(x,a,\mu,u)$.

The adjoint equations  for  the unknown adapted processes $p(t),q(t),r(t,z),P(t),Q(t)$ and $R(t,z)$ are the BSDEs 
\begin{equation}\label{eq0613adjointa3}
\left \{
  \begin{aligned}
   dp(t)=&-\big\{\cH_a(t)+\mathbb{E}'\cH_{\mu}(t';\bar{X}(t))\big\}dt+q(t) dW(t)&\\
		    &+\int_{U_0}r(t-,z)\widetilde{N}(dt,dz),&\\
   p(T)=&\Phi_a(\bar{X}(T),\bar{A}(T)),&
      \end{aligned}
  \right.
\end{equation}
and 
\begin{equation}\label{eq0613adjointa4}
\left \{
  \begin{aligned}
   dP(t)=&-\big\{\cH_x(t)+\bar{A}(t)\mathbb{E}'\cH_{\mu,1}(t';\bar{X}(t))\big\}dt+Q(t) dW(t)&\\
&+\int_{U_0}R(t-,z)\widetilde{N}(dt,dz),&\\
   P(T)=&\Phi_x(\bar{X}(T),\bar{A}(T)),&
      \end{aligned}
  \right.
\end{equation}
where we simply write
\begin{equation}\label{hmdhx1}
\cH(t):=\cH\big(\bar{X}(t),\bar{A}(t),\bar{\mu}(t), \bar{u}(t), p(t),q(t),{\blue{r(t,\cdot)}}, P(t),Q(t),R(t,\cdot)\big),
\end{equation}
\begin{equation*}
\cH_{\mu}(t'; \bar{X}):=\cH_{\mu}\big(\bar{X}'(t),\bar{A}'(t),\bar{\mu}'(t), \bar{u}'(t), p'(t),q'(t),{\blue{r'(t,\cdot)}}, P'(t),Q'(t),R'(t,\cdot);\bar{X}(t)\big),
\end{equation*}
whenever no confusion arises. And $\phi_x$ is the gradient of $\phi$ with respect to the variable $x$, the same convention also apply to variables $a$ and $u$.

The following result has been proved by Sun and Yong (\cite{S-Y-2014}, Proposition 2.1) for diffusion case without jump. The proof for the current
setup is similar and we leave to the reader.
\begin{theorem}\label{Existence}
Suppose  Hypotheses \ref{hyp} and  \ref{hyp1}  hold. Then for given $\big(X(\cdot), A(\cdot),  u(\cdot)\big)$, there exist  triples $\big(p(\cdot), q(\cdot), r(\cdot,\cdot)\big)$ and $\big (P(\cdot), Q(\cdot), R(\cdot, \cdot)\big)$  satisfying  (\ref{eq0613adjointa3}) and  (\ref{eq0613adjointa4}), respectively. 
\end{theorem}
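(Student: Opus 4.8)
The systems (\ref{eq0613adjointa3}) and (\ref{eq0613adjointa4}) form a coupled pair of \emph{linear} mean-field BSDEs with jumps, driven by $W$ and $\widetilde N$. Inspecting the Hamiltonian (\ref{eq0611H}) and its derivatives (all coefficients being evaluated along the given triple), one sees that the generator of (\ref{eq0613adjointa3}) is affine in $(p,q,r)$ with the \emph{bounded} coefficients $\al,\be,\ga$ (Hypothesis \ref{hyp}(3),(4)) plus the free term $f_a(t)+\EE'\cH_\mu(t';\bar X(t))$, while the generator of (\ref{eq0613adjointa4}) is affine in $(P,Q,R)$ with the bounded coefficients $b_x,\si_x,\eta_x$ plus the free term $f_x(t)+\bar A(t)\EE'\cH_{\mu,1}(t';\bar X(t))$, together with a part affine in $(p,q,r)$ whose coefficient --- inherited from the $A(p\al+q^\top\be+\int_{U_0}r\ga\,\nu(dz))$ term of $\cH$ --- is the \emph{unbounded, random} process $\bar A(t)$ times $\al_x,\be_x,\ga_x$. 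Moreover, by independence of the copy space $(\Om',\mathscr{F}',\PP')$, for each fixed $\om$ the terms $\EE'\cH_\mu(t';\bar X(t))$ and $\EE'\cH_{\mu,1}(t';\bar X(t))$ are deterministic functionals of the joint law of $(\bar X,\bar A,\bar u,p,q,r,P,Q,R)$ evaluated at the point $\bar X(t,\om)$.

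The plan is to solve this system by the Picard iteration of \cite[Prop.~2.1]{S-Y-2014}, adapted to the present setting. First I would record the integrability of the data: by Hypothesis \ref{hyp1} the terminal values $\Phi_a(\bar X(T),\bar A(T))$, $\Phi_x(\bar X(T),\bar A(T))$ and the derivatives $f_a,f_x,f_\mu,f_{\mu,1}$ have at most linear growth, and by Hypothesis \ref{hyp}(1),(5) the measure-derivatives of $b,\si$ grow linearly in the space variable while those of $\al,\be$ are bounded. Since $\al,\be,\ga$ are bounded, $\bar A$ solves a linear SDE with bounded characteristics, hence $\bar A\in\cS^m$ for every $m\ge1$; combined with the estimates of Section \ref{sec3} one also has $\bar X\in\cS^m$ for every $m$, whereas $\bar u$ is only square-integrable. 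It follows that the terminal data lie in $L^m(\Om)$ for every $m$ and that all the free terms lie in $L^2(\Om\times[0,T])$. Next I would set up the iteration map $\Th$ which, given a previous iterate, freezes it inside the mean-field terms of both equations and inside the $\bar A(t)$-weighted part of (\ref{eq0613adjointa4}) and returns the solution of the two resulting linear BSDEs with bounded coefficients; solvability at each step is the classical linear BSDE-with-jumps result, the jump parts contributing only the usual extra term $\int_{U_0}|R(t,z)|^2\nu(dz)$ in the It\^o/Burkholder--Davis--Gundy estimate and the forcing $\int_{U_0}R^\top\bar\eta_x\,\nu(dz)$ being controlled via Hypothesis \ref{hyp}(2),(4).

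The one step that needs genuine care --- and the reason this does not reduce verbatim to the no-jump, bounded-coefficient situation of \cite{S-Y-2014} --- is precisely the unbounded coefficient $\bar A(t)$ in (\ref{eq0613adjointa4}), which prevents a plain $L^2$ fixed-point estimate from closing. I would handle it by running $\Th$ with $(p,q,r)$ in $\cS^2\times L^2_W\times L^2_{\widetilde N}$ but $(P,Q,R)$ in the slightly weaker space $\cS^{p'}\times L^{p'}_W\times L^{p'}_{\widetilde N}$ with a fixed $p'\in(1,2)$: the forcing $\bar A(t)(p(t)\al_x+q(t)^\top\be_x+\int_{U_0}r(t,z)\ga_x\,\nu(dz))+\bar A(t)\EE'\cH_{\mu,1}(t';\bar X(t))$ then lies in $L^{p'}(\Om\times[0,T])$ by H\"older's inequality, since $\bar A\in\cS^m$ for every $m$ while $(p,q,r)$ are square-integrable, so the $L^{p'}$-theory of BSDEs with jumps applies to (\ref{eq0613adjointa4}); crucially, the resulting contraction constant for the second component involves only $\|\bar A\|_{\cS^m}<\infty$ and a positive power of the length of the time interval, so short subintervals give a local contraction. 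The mean-field self-reference of $P$ through $\EE'\cH_{\mu,1}$ enters only via $\EE|P(t)|$ against a coefficient of linear growth in $\bar X(t)$, and is absorbed into the same Gronwall step. Patching the local contractions over $[0,T]$ then yields the two triples, uniqueness following as a by-product; as the remaining estimates are routine adaptations of \cite[Prop.~2.1]{S-Y-2014}, we omit the details.
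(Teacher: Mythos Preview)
The paper itself does not prove this theorem: it simply records that the diffusion analogue is Proposition~2.1 of Sun--Yong \cite{S-Y-2014} and asserts that ``the proof for the current setup is similar and we leave to the reader.'' Your proposal therefore goes well beyond the paper's own treatment while following the same strategy it points to, namely a Picard/contraction argument for a linear mean-field BSDE system, now with jumps.

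Your elaboration is essentially sound. You correctly separate the structure of the two drivers: in (\ref{eq0613adjointa3}) the coefficients of $(p,q,r)$ are the bounded functions $\al,\be,\ga$, while in (\ref{eq0613adjointa4}) the coefficients of $(P,Q,R)$ are the bounded $b_x,\si_x,\eta_x$ but an additional forcing of the form $\bar A(t)(p\al_x+q^\top\be_x+\int r\ga_x\,\nu(dz))$ appears, with the \emph{unbounded} factor $\bar A$. This is the genuine obstruction to a verbatim transcription of the bounded-coefficient argument in \cite{S-Y-2014}, and the paper glosses over it. Your remedy --- running $(p,q,r)$ in $L^2$ and $(P,Q,R)$ in $L^{p'}$ for some fixed $p'\in(1,2)$, so that $\bar A p\in L^{p'}$ by H\"older since $\bar A\in\cS^m$ for every $m$ (Lemma~\ref{lem1}) --- closes the estimate and yields a short-time contraction that can be patched; the mean-field coupling back from $(P',Q')$ into the $p$-equation enters only through $\EE'|P'|\le\|P\|_{L^{p'}}$ against a factor of linear growth in $\bar X$, which is harmless because $\bar X\in\cS^m$ for all $m$. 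One minor point worth making explicit: the free terms $f_a,f_x$ are only $L^2$ in general because $f_a,f_x$ have linear growth in $u$ and $\bar u$ is merely square-integrable; this is exactly why one cannot simply upgrade $(p,q,r)$ to higher $L^m$ and stay in $L^2$ for $(P,Q,R)$, and it justifies the asymmetric choice of spaces you make.
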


The following is the first  main result of this paper, which presents a stochastic maximum principle for the weighted mean field control problem with jumps.

\begin{theorem}\label{smp}
(Stochastic Maximum Principle) Suppose Hypotheses \ref{hyp} and \ref{hyp1} are satisfied. If $\bar{u}(t)$ is an optimal solution to Problem \ref{prob}, then there exist a pair of processes, $\big(p(t),q(t),r(t,\cdot)\big)$ and $\big(P(t),Q(t),R(t,\cdot)\big)$, which satisfy equations (\ref{eq0613adjointa3}) and (\ref{eq0613adjointa4}) such that  
\begin{equation}\label{SMP}
\cH_u(\bar{X}(t),\bar{A}(t),\mu^{\bar{X}(t),\bar{A}(t)}(t),\bar{u}(t),p(t),q(t),{\blue{r(t,\cdot)}},P(t),Q(t),R(t,\cdot))=0,
\end{equation}
for a.e. $t\in[0,T]$, $\PP$-a.s.
\end{theorem}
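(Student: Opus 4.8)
The plan is to use the convex perturbation (spike-free) method adapted to the weighted mean-field setting, with the $L_{p,q}$-estimate replacing the usual $L^2$-estimate that fails because $\phi_\mu$ has only linear growth in $x'$. Fix the optimal control $\bar u(\cdot)$ with state $(\bar X,\bar A)$, pick an arbitrary $v(\cdot)\in\cU$, and for $\theta\in(0,1)$ set $u^\theta=\bar u+\theta(v-\bar u)$; by convexity of $U$ this lies in $\cU$. Let $(X^\theta,A^\theta)$ be the corresponding state. First I would introduce the variational processes $(Y_1,Y_2)$ solving the linearized system obtained by formally differentiating (\ref{eq0613a}) at $\theta=0$ — note the linearization of the measure argument produces terms of the form $\EE'\!\big[b_\mu(\bar X,\bar\mu,\bar u;\bar X')(\bar A' Y_1' + Y_2'\,\mathbf{1})\big]$ because $\mu^{X,A}=\EE(A\delta_X)$ depends on both $X$ and $A$; this is exactly why the adjoint equations (\ref{eq0613adjointa3})–(\ref{eq0613adjointa4}) involve both $\cH_\mu$ (paired with the $A$-dynamics/$p$) and $\cH_{\mu,1}$ (paired with the $X$-dynamics/$P$, weighted by $\bar A$). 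The key technical step is the estimate
\[
\EE\sup_{s\le t}\big|\theta^{-1}(X^\theta(s)-\bar X(s))-Y_1(s)\big|^p+\Big(\EE\sup_{s\le t}\big|\theta^{-1}(A^\theta(s)-\bar A(s))-Y_2(s)\big|^q\Big)^{p/q}\to 0
\]
as $\theta\to0$, for the chosen $(p,q)$; this is obtained by writing the equation for the remainder, applying Burkholder–Davis–Gundy and Hypotheses \ref{hyp}(1)–(5) (the linear-growth bound on $\phi_\mu$, the uniform positivity $1+\gamma\ge k>0$ controlling the exponential $A$-process, the $L^p$-bounds on $\eta,\gamma$ and their derivatives), and closing with Gronwall. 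This is the step I expect to be the main obstacle: one must juggle the two different exponents (the $A$-norm in $L^q$, the $X$-norm in $L^p$), use Hölder to absorb products such as $\bar A'Y_1'$ into $f(t)$, and verify that the particular choice of $p,q$ makes all the cross terms close — the $L^{p,q}$-estimates of Section 3 should supply the uniform moment bounds $\sup_\theta\EE\sup_{s\le T}(|X^\theta|^p+|A^\theta|^q)<\infty$ needed here.

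Granting the variational expansion, the second step is to expand the cost:
\[
0\le \frac{J(u^\theta)-J(\bar u)}{\theta}\;\longrightarrow\; \EE\int_0^T\!\Big(f_x Y_1+f_a Y_2+\EE'[f_\mu(t';\bar X(t))(\bar A'Y_1'+Y_2')]+f_u(v-\bar u)\Big)dt+\EE\big[\Phi_x(T)Y_1(T)+\Phi_a(T)Y_2(T)\big],
\]
using Hypothesis \ref{hyp1} and dominated convergence (justified by the uniform moment bounds and Lipschitz derivatives). By Fubini the mean-field terms $\EE\EE'[f_\mu(t';\bar X(t))\bar A'Y_1']$ are symmetrized into $\EE[Y_1(t)\,\EE'\cH_\mu(t';\bar X(t))]$-type expressions (and similarly for $Y_2$), which is precisely the structure appearing in (\ref{eq0613adjointa3})–(\ref{eq0613adjointa4}).

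The third step is the duality computation: apply It\^o's formula to $p(t)Y_2(t)+P(t)^\top Y_1(t)$ on $[0,T]$, using the BSDEs (\ref{eq0613adjointa3}), (\ref{eq0613adjointa4}) for $dp,dP$ and the linearized SDEs for $dY_1,dY_2$. The jump terms pair $R$ with $\eta$-increments and $r$ with $\gamma$-increments, the Brownian terms pair $Q$ with $\sigma$-increments and $q$ with $\beta$-increments, and the drift terms reproduce exactly $\cH_x, \cH_a$ and the mean-field corrections; taking expectations kills all martingale parts (the integrability needed is again from the $L^{p,q}$-bounds and Theorem \ref{Existence}). Equating the two expressions for $\EE[p(T)Y_2(T)+P(T)^\top Y_1(T)]=\EE[\Phi_a(T)Y_2(T)+\Phi_x(T)^\top Y_1(T)]$ collapses the inequality from step two to
\[
\EE\int_0^T \cH_u\big(\bar X(t),\bar A(t),\bar\mu(t),\bar u(t),p(t),q(t),r(t,\cdot),P(t),Q(t),R(t,\cdot)\big)^\top\big(v(t)-\bar u(t)\big)\,dt\ge 0.
\]
Finally, since $U$ is open and convex and $v\in\cU$ is arbitrary, a standard localization argument — take $v(t)=\bar u(t)+\varepsilon\,\mathbf{1}_{E}(t,\omega)\,e$ for small $\varepsilon$, arbitrary $e\in\RR^k$, and $E$ ranging over predictable sets — forces $\cH_u=0$ for a.e. $t$, $\PP$-a.s., which is (\ref{SMP}). (Openness of $U$ lets $\varepsilon$ have either sign, upgrading the inequality to an equality.)
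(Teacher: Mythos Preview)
Your proposal is correct and follows essentially the same route as the paper: convex perturbation $u^\theta$, $L_{p,q}$-convergence of the difference quotients to the variational system (the paper's two lemmas in Section~4, which also pass through the log-transform $\tilde A=\ln A$ to tame the multiplicative $A$-equation), expansion of the cost, and It\^o duality of $p\,Y_2+P^\top Y_1$ against the adjoint BSDEs to collapse everything to the variational inequality in $\cH_u$. One small slip to fix: in your linearized measure term the $\bar A'Y_1'$ factor should be multiplied by $b_{\mu,1}$ (the $x'$-derivative of $b_\mu$), not $b_\mu$ itself---as you in fact recognize two lines later when you correctly pair $\cH_{\mu,1}$ with the $X$-dynamics.
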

{\blue{\begin{remark}
If $U$ is a convex set without assuming the openness, then (\ref{SMP}) need to be modified when $\bar{u}(t)\in\partial U$ as follows
\[\vec{n}\cdot\cH_u(\bar{X}(t),\bar{A}(t),\mu^{\bar{X}(t),\bar{A}(t)}(t),\bar{u}(t),p(t),q(t),r(t,\cdot),P(t),Q(t),R(t,\cdot))\ge 0,\]
for any vector $\vec{n}$ starting at $\bar{u}(t)$ pointing toward the inner of $U$.
\end{remark}}}

Next, we present a sufficient condition for the optimality of the weighted mean-field control problem with jumps. Besides 
 Hypotheses \ref{hyp} and \ref{hyp1}, we need the following two assumptions.
 \begin{enumerate}
 \item[(S1)]The Hamiltonian $\cH$ is convex with respect to the variables  $(x,\mu,u).$
 \item[(S2)] The terminal cost function $\Phi$ is convex with respect to the variables $(x,a).$
\end{enumerate}

\begin{theorem}\label{Sufficient}
(Sufficience) Suppose  that Hypotheses \ref{hyp}, \ref{hyp1}  and (S1)-(S2) hold. Let \linebreak[4] $\big(p(\cdot), q(\cdot), r(\cdot)\big)$ and $\big (P(\cdot), Q(\cdot), R(\cdot, \cdot)\big)$ be the unique solutions to   (\ref{eq0613adjointa3}) and  (\ref{eq0613adjointa4}), respectively. If $\bar{u}(\cdot)$   is an admissible control satisfying (\ref{SMP}). Then $\bar{u}(\cdot)$ is an optimal control process to Problem \ref{prob}. 
\end{theorem}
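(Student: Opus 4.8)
\medskip\noindent\textit{Proof strategy.}\quad The plan is to prove the stronger fact that $J(u)\ge J(\bar u)$ for \emph{every} $u\in\cU$, by the convexity comparison method standard in the sufficiency half of the stochastic maximum principle, adapted here to jumps and to the weighted mean field. Fix $u\in\cU$ with associated state $(X(\cdot),A(\cdot))$ solving (\ref{eq0613a}), and set $\de X=X-\bar X$, $\de A=A-\bar A$, $\de u=u-\bar u$; since both trajectories issue from $(x,a)$ we have $\de X(0)=\de A(0)=0$. Writing $f(t),\bar f(t)$ for $f$ evaluated along $(X,A,\mu^{X,A},u)$ and along $(\bar X,\bar A,\bar\mu,\bar u)$ respectively, split
\[
J(u)-J(\bar u)=\EE\int_0^T\big(f(t)-\bar f(t)\big)\,dt+\EE\big[\Phi(X(T),A(T))-\Phi(\bar X(T),\bar A(T))\big].
\]
By (S2) and the terminal conditions of (\ref{eq0613adjointa3})--(\ref{eq0613adjointa4}) the terminal term is bounded below by $\EE\big[P(T)^\top\de X(T)+p(T)\,\de A(T)\big]$. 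Applying the It\^o product rule for semimartingales with jumps to $t\mapsto P(t)^\top\de X(t)+p(t)\,\de A(t)$ on $[0,T]$ and taking expectations, the Brownian and compensated-Poisson integrals are true martingales and drop out --- this is exactly where the $L_{p,q}$-estimates of Section \ref{sec3}, applied both to $(\de X,\de A)$ and to the adjoint processes, are needed, the coefficients not being bounded --- while the drift, together with the continuous and jump quadratic covariations (the latter producing the $\int_{U_0}R^\top(\eta-\bar\eta)\,\nu(dz)$ and $\int_{U_0}r\,(A\ga-\bar A\bar\ga)\,\nu(dz)$ terms), reassembles by the very definition (\ref{eq0611H}) of $\cH$ into
\begin{align*}
\EE\big[P(T)^\top\de X(T)+p(T)\,\de A(T)\big]
&=\EE\int_0^T\Big\{\tilde\cH(t)-\cH(t)-\big(f(t)-\bar f(t)\big)-\cH_x(t)^\top\de X(t)-\cH_a(t)\,\de A(t)\\
&\qquad\qquad-\bar A(t)\,\EE'\cH_{\mu,1}(t';\bar X(t))^\top\de X(t)-\EE'\cH_\mu(t';\bar X(t))\,\de A(t)\Big\}\,dt,
\end{align*}
where $\tilde\cH(t)$ is $\cH$ at the perturbed arguments $\big(X(t),A(t),\mu^{X,A}(t),u(t)\big)$ with the \emph{same} adjoint processes $(p,q,r,P,Q,R)$ as in $\cH(t)$. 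Substituting back, the $\pm(f-\bar f)$ terms cancel and
\[
J(u)-J(\bar u)\ \ge\ \EE\int_0^T\Big\{\tilde\cH(t)-\cH(t)-\cH_x(t)^\top\de X-\cH_a(t)\,\de A-\bar A(t)\,\EE'\cH_{\mu,1}(t';\bar X(t))^\top\de X-\EE'\cH_\mu(t';\bar X(t))\,\de A\Big\}\,dt.
\]

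Next I would invoke the convexity of the Hamiltonian. Since $\cH$ is affine in $a$, convex in $(x,\mu,u)$ by (S1), and $C^1$ in $(x,a,\mu,u)$ under Hypotheses \ref{hyp}--\ref{hyp1}, it satisfies the first-order (subgradient) inequality
\[
\tilde\cH(t)-\cH(t)\ \ge\ \cH_x(t)^\top\de X+\cH_a(t)\,\de A+\<\mu^{X,A}(t)-\bar\mu(t),\,\cH_\mu(t;\cdot)\>+\cH_u(t)^\top\de u ,
\]
the measure-direction term being the directional derivative of Definition \ref{def2.2} (since $\cH$ is only assumed convex in $(x,\mu,u)$, the $a$-dependence of the derivatives here contributes extra terms which, like the remainders appearing below, are of second order in $(\de X,\de A)$ and are carried along and handled together). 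The maximum-principle identity (\ref{SMP}), $\cH_u(t)=0$, kills the last term, so the whole problem reduces to showing
\[
\EE\int_0^T\<\mu^{X,A}(t)-\bar\mu(t),\,\cH_\mu(t;\cdot)\>\,dt\ \ge\ \EE\int_0^T\Big\{\bar A(t)\,\EE'\cH_{\mu,1}(t';\bar X(t))^\top\de X(t)+\EE'\cH_\mu(t';\bar X(t))\,\de A(t)\Big\}\,dt .
\]

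This weighted-mean-field inequality is the main obstacle, and it hinges on the structure $\<\mu^{X,A}(t),g\>=\EE[A(t)g(X(t))]$. Because $\mu^{X,A}(t)$ and $\bar\mu(t)$ are \emph{deterministic} measures, Fubini on the product space $(\Omega\times\Omega',\mathscr{F}\otimes\mathscr{F}',\PP\otimes\PP')$ gives $\EE\<\mu^{X,A}(t)-\bar\mu(t),\cH_\mu(t;\cdot)\>=\<\mu^{X,A}(t)-\bar\mu(t),\EE'\cH_\mu(t';\cdot)\>$; expanding this pairing against the deterministic function $y\mapsto\EE'\cH_\mu(t';y)$ and writing $A=\bar A+\de A$, $X=\bar X+\de X$ reproduces exactly the two adjoint mean-field terms on the right, up to remainders $\bar A(t)\big(\EE'\cH_\mu(t';\bar X+\de X)-\EE'\cH_\mu(t';\bar X)-\EE'\cH_{\mu,1}(t';\bar X)^\top\de X\big)$ and $\de A(t)\big(\EE'\cH_\mu(t';\bar X+\de X)-\EE'\cH_\mu(t';\bar X)\big)$. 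The first has the right sign because $A$ keeps a constant sign (the positivity requirement in Hypothesis \ref{hyp} forces $1+\ga\ge k>0$) and $y\mapsto\cH_\mu(t';y)$ is $C^1$ with controlled derivative (the final part of Hypothesis \ref{hyp}); the interplay of the second remainder with the convexity slack left by (S1)--(S2) and again with the sign of $A$ is what delivers overall nonnegativity, and the $L_{p,q}$-bounds of Section \ref{sec3} ensure $\<\mu^{X,A}(t),g\>$ is finite for $g$ of linear growth and legitimize the Fubini interchange. With this step in place every integrand on the right is nonnegative, hence $J(u)\ge J(\bar u)$, and since $u\in\cU$ was arbitrary, $\bar u$ is optimal. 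I expect essentially all of the real work to sit here --- in the ``chain rule'' splitting the $\mu$-derivative of a functional of the weighted law $\mu^{X,A}$ into a $\cH_{\mu,1}$-piece paired against $\bar A\,\de X$ and a $\cH_\mu$-piece paired against $\de A$, and in controlling the induced second-order remainders; the earlier It\^o and $\Phi$-convexity steps are essentially routine.
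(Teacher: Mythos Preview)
Your overall strategy---bound the terminal increment by convexity of $\Phi$, apply the It\^o product rule to $P^\top\de X + p\,\de A$, then invoke convexity of $\cH$---is exactly the paper's route. The paper's version is short: after the It\^o step they arrive at an inequality of the form of your displayed lower bound (their (\ref{SSMP6})), then they write the convexity inequality for $\cH$ (their (\ref{SSMP4})) \emph{already in the linearized form}
\[
\EE'\big[\cH_\mu(s;\bar X')\,\de A' + \bar A'\,\cH_{\mu,1}(s;\bar X')\,\de X'\big],
\]
so that after taking expectations it cancels \emph{exactly} against the adjoint mean-field drift, leaving only $\EE\int_0^T\langle\cH_u,\de u\rangle\,ds=0$. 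In other words, the paper treats (S1) as delivering (\ref{SSMP4}) directly, with no remainder; your decomposition into the directional derivative $\langle\mu^{X,A}-\bar\mu,\cH_\mu(t;\cdot)\rangle$ plus second-order remainders is not the path taken there.

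Where your proposal has a genuine gap is in the treatment of those remainders. You claim the first remainder $\bar A\big(\EE'\cH_\mu(t';X)-\EE'\cH_\mu(t';\bar X)-\EE'\cH_{\mu,1}(t';\bar X)^\top\de X\big)$ is nonnegative ``because $A$ keeps a constant sign and $y\mapsto\cH_\mu(t';y)$ is $C^1$ with controlled derivative''. But $C^1$ with bounded derivative yields no sign whatsoever for a second-order Taylor remainder; you would need \emph{convexity} of $y\mapsto\cH_\mu(\cdot;y)$, and that is not contained in (S1) (which is convexity of $\cH$ in $(x,\mu,u)$, not of its $\mu$-derivative in the evaluation point $y$). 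The second remainder $\de A\cdot(\EE'\cH_\mu(t';X)-\EE'\cH_\mu(t';\bar X))$ you leave to an unspecified ``interplay with the convexity slack'', which is not an argument either---$\de A$ has no sign. So as written, your final inequality is not established.

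If you want to make the argument go through on your lines, you should read (S1) as the paper effectively does: joint convexity strong enough that the first-order inequality (\ref{SSMP4}) holds with the measure term already in the split $\cH_\mu\cdot\de A'+\bar A'\cH_{\mu,1}\cdot\de X'$ form. Then no remainder analysis is needed and the cancellation is exact. Alternatively, strengthen (S1) explicitly to include convexity of $y\mapsto\cH_\mu(\cdot;y)$ and control the cross term separately; but that is not what the paper assumes.
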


\section{Existence and uniqueness of the solution for the  system} \label{sec3}
\setcounter{equation}{0}
\renewcommand{\theequation}{\thesection.\arabic{equation}}

In this section, we fix $u(\cdot)\in\cU$ and consider the existence and uniqueness of the system (\ref{eq0613a}). Before presenting  the theorem, we state the following lemma.

\begin{lemma}\label{lem1}
Under (\ref{3}) and  (\ref{4}) of Hypothesis \ref{hyp},  we have 
\[{\blue{\EE\big(\sup_{t\leq T }A(t)^p\big)\leq K}},\]
for all $ p\geq1$.
\end{lemma}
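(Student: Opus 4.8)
The plan is to use only the linearity of the $A$-equation in $A$, treating $\al(X(\cdot),\mu^{X,A}(\cdot),u(\cdot))$, $\be(X(\cdot),\mu^{X,A}(\cdot),u(\cdot))$ and $\ga(X(\cdot-),u(\cdot-),\cdot)$ as given adapted coefficients for which only boundedness and integrability matter; in particular the $X$-equation plays no role. Note first that, since $1+\ga\ge k>0$, the multiplicative structure forces $A$ to keep the sign of $a$, so $A(t)^p$ is well defined. Fix $p\ge1$ and apply It\^o's formula for jump processes to $A(t)^p$, using $A(t)=A(t-)\big(1+\ga(t,z)\big)$ at a jump; this yields
\[A(t)^p=a^p+\int_0^t A(s)^p\,\Psi(s)\,ds+M(t),\qquad \Psi(s):=p\,\al(s)+\tfrac{p(p-1)}2|\be(s)|^2+\int_{U_0}\!\big((1+\ga(s,z))^p-1-p\,\ga(s,z)\big)\nu(dz),\]
where $M(t)=\int_0^t pA(s)^p\be(s)^\top dW(s)+\int_0^t\!\int_{U_0}A(s-)^p\big((1+\ga(s,z))^p-1\big)\widetilde N(ds,dz)$ is a locally square-integrable local martingale. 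The structural point is that $\Psi$ is bounded by a deterministic constant: $\al,\be$ are bounded by Hypothesis~\ref{hyp}\,(\ref{3}), and the elementary inequality $|(1+x)^p-1-px|\le C_{p,k}(|x|^2+|x|^p)$, valid for $x\ge k-1$ (we have $1+\ga\ge k>0$ by Hypothesis~\ref{hyp}\,(\ref{4})), together with $\int_{U_0}|\ga(s,z)|^q\nu(dz)\le K$ for $q=2,p$, gives $\sup_{s\le T}|\Psi(s)|\le C_0(p)$ a.s.; the same reasoning gives $\int_{U_0}\big((1+\ga(s,z))^p-1\big)^2\nu(dz)\le C$ a.s.

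The first step is an a priori moment estimate \emph{without} the supremum, simultaneously for all $p\ge1$. Since $\Psi(s)-C_0(p)\le0$ and $A(s)^p\ge0$, the rescaled process $Y(t):=e^{-C_0(p)t}A(t)^p$ can be written $Y(t)=a^p+D(t)+\widetilde M(t)$ with $D$ nonincreasing and null at $0$ and $\widetilde M(t):=\int_0^t e^{-C_0(p)s}\,dM(s)$ a local martingale; as $Y\ge0$ we get $\widetilde M(t)\ge-a^p$, hence $\widetilde M$ is a supermartingale, $\EE[\widetilde M(t)]\le0$, and so $\EE[A(t)^p]\le a^p e^{C_0(p)t}$. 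This gives
\[\sup_{t\le T}\EE[A(t)^p]\le a^p e^{C_0(p)T}\qquad\text{for every }p\ge1.\]

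The second step upgrades this to the supremum. From the pathwise bound $\langle M\rangle_T\le(p^2\|\be\|_\infty^2+C)\int_0^T A(s)^{2p}\,ds$ (using boundedness of $\be$ and $\int_{U_0}((1+\ga)^p-1)^2\nu\le C$) and the first step applied \emph{with exponent $2p$}, we get $\EE\langle M\rangle_T\le(p^2\|\be\|_\infty^2+C)\int_0^T\EE[A(s)^{2p}]\,ds<\infty$, so $M$ is a square-integrable martingale and, by Cauchy--Schwarz and Doob's $L^2$-inequality, $\EE\big[\sup_{t\le T}|M(t)|\big]\le 2\big(\EE\langle M\rangle_T\big)^{1/2}<\infty$. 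Taking suprema and expectations in the It\^o identity, $\sup_{t\le T}A(t)^p\le a^p+C_0(p)\int_0^T A(s)^p\,ds+\sup_{t\le T}|M(t)|$, and inserting the first step (with exponent $p$) together with the bound just obtained yields $\EE\big[\sup_{t\le T}A(t)^p\big]\le K$, with $K$ depending only on $p,a,T,k$ and the constants $K,\|\al\|_\infty,\|\be\|_\infty$ of Hypothesis~\ref{hyp}. Having the first step for all exponents --- in particular $2p$ --- is exactly what makes this go through, which is why it is natural to prove the lemma for all $p\ge1$ at once.

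I expect the only genuinely delicate point to be the a priori bound of the first step: because $\nu$ is merely $\si$-finite, the naive localization $\tau_n=\inf\{t:A(t)\ge n\}$ need not keep $\EE[A(\cdot\wedge\tau_n)^p]$ finite (the jump at $\tau_n$ is uncontrolled), so one has to argue structurally through ``a nonnegative local martingale is a supermartingale'', as above. An alternative that bypasses this step is to write $A$ explicitly as the stochastic exponential $A(t)=a\,\mathcal E\big(\int_0^\cdot\al\,ds+\int_0^\cdot\be^\top dW+\int_0^\cdot\int_{U_0}\ga\,\widetilde N\big)(t)$ (well defined and of constant sign since $1+\ga\ge k>0$) and to estimate $\EE\sup_{t\le T}|A(t)|^p$ by H\"older's inequality, controlling the bounded-variation part of the exponent directly, the Brownian part via Doob's inequality, and the compensated-jump part via the exponential formula $\EE\exp\big(\int\!\int h\,\widetilde N\big)=\exp\big(\int\!\int(e^h-1-h)\,\nu\big)$; the same elementary inequalities show the $\nu$-integrals that arise are finite.
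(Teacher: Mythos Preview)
Your argument is correct and follows a route parallel to, but technically different from, the paper's. Both proofs share the same two-step architecture: first establish the fixed-time bound $\sup_{t\le T}\EE[A(t)^p]\le K$, then upgrade to $\EE[\sup_{t\le T}A(t)^p]\le K$. For the first step the paper passes to $\tilde A=\ln A$, writes $A^p$ explicitly as the product of a Dol\'eans--Dade exponential martingale and a factor with bounded exponent, and invokes a Girsanov change of measure; you instead apply It\^o's formula directly to $A^p$ and use the elementary fact that a local martingale bounded below is a supermartingale. For the second step the paper applies Kunita's inequality to the original linear SDE for $A$ and closes with Gronwall, whereas you bound the quadratic variation of the martingale part in the It\^o decomposition of $A^p$ via the fixed-time estimate at exponent $2p$ and apply Doob's $L^2$-inequality. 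Your approach is more self-contained (no change of measure, no Kunita inequality) and makes explicit why the full range $p\ge1$ is natural (the exponent-doubling in the second step); the paper's route via the logarithm highlights the stochastic-exponential structure of $A$ and is closer to the alternative you sketch in your final paragraph.
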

\begin{proof}
   Let $\tilde{A}(t)=\ln A(t)$. Applying It\^o's  formula for $ \tilde{A}(t)$, we have 
\begin{eqnarray*}
    \tilde{A}(t)&=&\ln a_0+\int_0^t\big(\al(X, \mu, u)-\frac{1}{2}\be^\top \be (X,\mu, u)\big)ds+\int_0^t \be^\top (X,\mu,u)dW(s)\\
    &&+\int_0^t\int_{U_0}\big(\ln(1+\gamma(X,  u, z))-\ga(X,  u, z)\big)\nu(dz)ds\\
    &&+\int_0^t\int_{U_0}\ln(1+\gamma(X,  u, z))\tilde{N}(ds,dz).
\end{eqnarray*}
Hence, 
\begin{eqnarray*}
A(t)&=&a_0\exp\Big\{\int_0^t\big(\al(X, \mu, u)-\frac{1}{2}\be^\top \be (X,\mu, u)\big)ds+\int_0^t\be^\top (X, \mu, u)dW(s)\\
&& \ \ \ \ \ \ \ \ \ +\int_0^t\int_{U_0}\big(\ln(1+\gamma(X, u, z))-\ga(X,  u, z)\big)\nu(dz)ds\\
&&\ \ \ \ \ \ \ \ \ +\int_0^t\int_{U_0} \ln(1+\gamma(X,  u, z))\tilde{N}(ds,dz)
\Big\}. 
\end{eqnarray*}
 Let $\tilde{\al}=\al-\frac{1}{2}\be^\top \be $, then 
\begin{eqnarray*}
  A^p (t)&=&a_0^p\exp\bigg\{\int_0^t\(p\tilde{\al} (X,\mu, u)\)ds+\int_0^t p\be^\top (X, \mu, u)dW(s)\\
&& \ \ \ \  \ \ \ \ \ \ +\int_0^t\int_{U_0}\big(p\ln(1+\gamma(X,  u, z))-p\ga(X, u, z)\big)\nu(dz)ds\\
&&\ \ \ \ \ \ \ \ \ \   +\int_0^t\int_{U_0} p\ln(1+\gamma(X,  u, z))\tilde{N}(ds,dz)
\bigg\}\\
&=&a_0^p\exp\bigg\{\int_0^t\int_{U_0}\big(((1+\gamma)^p-p\gamma)(X, u,z)-1\big)\nu(dz)ds\\
&&\ \ \ \ \ \ \ \ \  +\int_0^t\big((p\tilde{\al}+\frac{p^2}{2}\be^\top \be\big) (X,\mu, u))ds +\int_0^tp\be^\top(X, \mu, u)dW(s)\\
&&\ \ \ \ \ \ \ \ \ -\frac{p^2}{2}\int_o^t\be^\top\be(X, \mu, u)ds+\int_0^t\int_{U_0}\ln(1+\ga)^p(X,  u, z)\tilde{N}(ds, dz)\\
&&\ \ \ \ \ \ \ \ \ +\int_0^t\int_{U_0}\big(\ln(1+\gamma)^p(X,  u, z)+1-(1+\ga)^p(X,  u, z)\big)\nu(dz)ds\bigg\}.
 \end{eqnarray*}  
 Since
 \begin{eqnarray*}
  M(t)&:=&\exp\bigg\{ \int_0^tp\be^\top(X, \mu, u)dW(s)-\frac{p^2}{2}\int_o^t\be^\top\be(X, \mu, u) ds \\
  &&\qquad +\int_0^t\int_{U_0}\ln(1+\ga)^p(X,  u, z)\tilde{N}(ds, dz)\\
&&\qquad +\int_0^t\int_{U_0}\big(\ln(1+\gamma)^p(X,  u, z)+1-(1+\ga)^p(X,  u, z)\big)\nu(dz)ds\bigg\} 
 \end{eqnarray*}
 is an exponential martingale, Girsanov theorem implies that 
\begin{eqnarray*}
   \EE\big(A(t)^p\big)&\leq& a^p\tilde{\EE}\exp\bigg\{ \int_0^T\big(p\tilde{\al}+\frac{p^2}{2}\be^\top \be\big) (X,\mu, u)ds\\
  &&\ \ \ \ \ \ \ \ \ \ +\int_0^T\int_{U_0}\big(((1+\gamma)^p-p\gamma)(X,u,z)-1\big)\nu(dz)ds\bigg\}\\
  &\leq &K,
\end{eqnarray*}
  where $\tilde{\EE}$ is the expectation with respect to an equivalent probability measure. 

The boundedness of $\la$, $\be$ and $\int_{U_0}|\gamma|^p\nu(dz)$ for $p\geq 2$ imply that 
\begin{eqnarray*}
  \EE\big(\sup_{t\leq T}A(t)^p\big)&=&  \EE\sup_{t\in[0,T]}\Big|a+\int_0^t \big(A\al(X, \mu, u)ds+A\be^\top(X, \mu, u)dW(s)\big)\\
  &&\ \ \ \ \ \ \ \ +\int_0^t\int_{U_0}A\ga(X, u, z)\tilde{N}(ds,dz)\Big|^p\\
  &\leq&  K+ \EE\Big(\int_0^T\big(|A\al(X, \mu, u)|^p+|A\be(X, \mu, u)|^p\big)dt\\ 
  &&\ \ \ \ \ \ \ \  +\int_0^T\int_{U_0}|A\ga(X,  u, z)|^2\nu(dz)^{p/2}dt+\int_0^T\int_{U_0}|A\gamma(X,  u,z)|^p\nu(dz)dt\Big)\\
  &\leq& K+K\EE\int_0^TA^pdt.
\end{eqnarray*}
Gronwall's inequality then yields the desired result.
\end{proof}

\begin{theorem}
Under  Hypothesis \ref{hyp}, the system (\ref{eq0613a}) has an unique  solution.
\end{theorem}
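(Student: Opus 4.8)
The plan is to prove existence and uniqueness by a Picard-type fixed point argument on the law of the pair $(X,A)$, with the key analytic ingredient being the $L_{p,q}$-estimate advertised in the introduction. The central difficulty, as the authors themselves emphasize, is the non-Lipschitz dependence of the coefficient $(a,\mu)\mapsto a\alpha(x,\mu)$ (and similarly $a\beta$, $a\gamma$): the drift and diffusion of the $A$-equation involve the product $A\cdot(\text{bounded coefficient})$, which is linear, not Lipschitz, in $A$; moreover the coefficients $b,\sigma$ are only of linear growth in $\|\mu\|$ rather than globally Lipschitz in the Wasserstein metric with a uniform constant, because of the third inequality in (\ref{condition1}). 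Lemma~\ref{lem1} already supplies the crucial a priori bound $\EE\sup_{t\le T}A(t)^p\le K$ for every $p\ge 1$, which is what makes the product terms controllable in $L^p$ for a well-chosen exponent.

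Concretely, I would proceed as follows. First, set up the Picard iteration: fix $u(\cdot)\in\cU$, start from $\mu^{(0)}(t)\equiv a\,\delta_x$ (or any reasonable deterministic flow of measures with bounded mass), and given $\mu^{(m)}(\cdot)$ solve the \emph{decoupled} linear SDE system (\ref{eq0613a}) with $\mu^{X,A}(t)$ replaced by $\mu^{(m)}(t)$; this decoupled system has standard Lipschitz/linear-growth coefficients in $(X,A)$ alone once the measure argument is frozen, so it has a unique strong solution $(X^{(m+1)},A^{(m+1)})$ by classical theory for SDEs with jumps, and then set $\mu^{(m+1)}(t)=\EE(A^{(m+1)}(t)\delta_{X^{(m+1)}(t)})$. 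Second, establish uniform moment bounds: $\EE\sup_{t\le T}|X^{(m)}(t)|^p\le K$ and $\EE\sup_{t\le T}|A^{(m)}(t)|^q\le K$ uniformly in $m$, using Lemma~\ref{lem1} for the $A$-moments (its proof only uses the boundedness of $\alpha,\beta,\int|\gamma|^p\nu(dz)$, which are frozen-measure-independent) and then Burkholder--Davis--Gundy plus Gronwall for the $X$-moments, where the linear growth of $b(0,\mu,u),\sigma(0,\mu,u)$ in $\|\mu\|$ is absorbed because $\|\mu^{(m)}(t)\|\le\EE A^{(m)}(t)\le K$. Third, and this is the heart of the argument, derive the $L_{p,q}$-contraction estimate: writing $\delta X=X^{(m+1)}-X^{(m)}$, $\delta A=A^{(m+1)}-A^{(m)}$, and
\[
f_m(t)\equiv\EE\sup_{s\le t}|\delta X(s)|^p+\Big(\EE\sup_{s\le t}|\delta A(s)|^q\Big)^{p/q},
\]
one applies It\^o/BDG to each component. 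The $X$-part is fine: $b,\sigma$ are Lipschitz in $(x,\mu)$ with constant $K$, so $|\delta X|^p$ contributes $K\int_0^t(\EE\sup_{r\le s}|\delta X(r)|^p+\EE\rho(\mu^{(m)}(s),\mu^{(m-1)}(s))^p)ds$. The $A$-part is where one must be careful: $\delta A$ solves an equation whose drift/diffusion are of the form $\delta A\cdot(\text{bounded})+A^{(m)}\cdot(\text{Lipschitz increments in }X,\mu)$; raising to the $q$-th power, applying BDG and H\"older with a conjugate split of the exponents so that the $A^{(m)}$ factor is controlled in a high enough $L^r$ norm by Lemma~\ref{lem1} and the increment factor $|\delta X|+\rho(\mu^{(m)},\mu^{(m-1)})$ is controlled in $L^p$, yields $\big(\EE\sup|\delta A|^q\big)^{p/q}\le K\int_0^t f_{m-1}(s)\,ds+K\int_0^t(\cdots)ds$. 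Finally one closes the loop by noting $\EE\rho(\mu^{(m)}(s),\mu^{(m-1)}(s))^p\le K f_{m-1}(s)$ — this needs the definition of $\rho$ together with $|\langle\mu^{(m)},f\rangle-\langle\mu^{(m-1)},f\rangle|\le\EE|A^{(m)}f(X^{(m)})-A^{(m-1)}f(X^{(m-1)})|$ and Lipschitz/boundedness of $f\in\LL_{Lip}$, again splitting the product $A\cdot f(X)$ difference via H\"older. Altogether $f_m(t)\le K\int_0^t f_{m-1}(s)\,ds$, whence $\sum_m f_m(T)<\infty$, the iterates form a Cauchy sequence, and the limit $(X,A)$ solves (\ref{eq0613a}); uniqueness follows by running the same estimate on the difference of two solutions and invoking Gronwall.

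The step I expect to be the main obstacle is the third one: choosing the exponents $p$ and $q$ correctly and carrying out the H\"older splittings in the $\delta A$ estimate so that every factor lands in a space where it is bounded uniformly. One needs $q$ large enough (relative to $p$) that after pulling out the $A^{(m)}$ prefactor in $L^r$ one still has enough integrability left for the $|\delta X|^p$ term and for the quadratic-variation/compensator terms coming from the jump integral $\int\gamma\,\widetilde N$; but $q$ cannot simply be taken arbitrarily large because the definition $f(t)=\EE\sup|\delta X|^p+(\EE\sup|\delta A|^q)^{p/q}$ must be self-consistent under the recursion. Getting the bookkeeping of exponents to close — and in particular handling the linear-growth (not bounded) nature of $b,\sigma$ and of $b_\mu,\sigma_\mu$ in the measure argument, which is precisely what distinguishes this from the bounded case of \cite{T-X-2023} where $p=2,q=1$ sufficed — is the technical crux. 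Everything else (the decoupled solvability, the uniform moment bounds, the Wasserstein-to-$f_m$ comparison, and the final Gronwall) is routine once that estimate is in hand.
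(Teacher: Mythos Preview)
Your overall architecture---Picard iteration, uniform moment bounds via Lemma~\ref{lem1}, an $L_{p,q}$-contraction estimate on
\[
f_m(t)=\EE\sup_{s\le t}|\delta X(s)|^p+\big(\EE\sup_{s\le t}|\delta A(s)|^q\big)^{p/q},
\]
and a final Gronwall---matches the paper exactly, and your identification of the non-Lipschitz product $a\mapsto a\alpha(x,\mu)$ as the obstruction is on target. But you miss the one technical device that makes the $\delta A$ estimate close cleanly: the paper does \emph{not} estimate $\delta A$ directly. Instead it sets $\tilde A=\ln A$ and runs the Picard iteration on $(X^n,\tilde A^n)$. Because $d\tilde A=\tilde\alpha\,dt+\beta^\top dW+\int(\ln(1+\gamma)-\gamma)\nu(dz)\,dt+\int\ln(1+\gamma)\,\tilde N$ with \emph{bounded} coefficients and no $A$ prefactor, the difference $\tilde A^{n+1}-\tilde A^n$ satisfies a standard SDE estimate:
\[
\EE\sup_{r\le s}\big|\tilde A^{n+1}(r)-\tilde A^n(r)\big|^2\le K\int_0^s\Big(\EE|X^n-X^{n-1}|^2+(\EE|A^n-A^{n-1}|^q)^{2/q}\Big)dr.
\]
Only at the very end does one convert back via the mean value theorem, $A^{n+1}-A^n=\xi(\tilde A^{n+1}-\tilde A^n)$ with $\xi$ between $A^n$ and $A^{n+1}$, and a single H\"older splitting
\[
\big(\EE|A^{n+1}-A^n|^q\big)^{2/q}\le\big(\EE|\xi|^{\frac{2q}{2-q}}\big)^{\frac{2-q}{q}}\,\EE|\tilde A^{n+1}-\tilde A^n|^2\le K\,\EE|\tilde A^{n+1}-\tilde A^n|^2,
\]
using Lemma~\ref{lem1}. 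The $A$ factor thus appears exactly once, outside all stochastic integrals, and the recursion closes with $p=2$ and $q=1+\delta$ for any $\delta\in(0,1)$.

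Your direct route---raising $|\delta A|$ to the $q$-th power and pulling $A^{(m)}$ out of the martingale and jump integrals by H\"older---is where the bookkeeping becomes genuinely harder than you suggest. The BDG term for $\int_0^t A^{(m)}\Delta\beta\,dW$ gives $\EE\big(\int_0^t |A^{(m)}|^2|\Delta\beta|^2\,ds\big)^{q/2}$; since $|\Delta\beta|\le K(|\delta X|+\rho)$, controlling $\EE(A^{(m)})^2|\delta X|^2$ by $\EE|\delta X|^p$ already forces $p>2$, and the analogous jump terms compound the exponent constraints. It can likely be pushed through with larger $p$, but not with the ease you imply, and certainly not with $q$ ``large enough'': in fact the paper takes $q=1+\delta$ strictly \emph{less} than $p=2$, so your instinct about the direction of $q$ is inverted. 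The logarithmic transformation is precisely what lets the paper avoid this entire tangle.
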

\begin{proof}
For the existence, we take a Picard sequence with
\begin{eqnarray*}
X^{n+1}(s)&=&x+\int^s_0b(X^n,\mu^{X^n,A^n},u)dr+\int^s_0\si(X^n,\mu^{X^n,A^n},u)dW(r)\\
&&\ \ +\int_0^s \int_{U_0}\eta(X^n,  u,z)\widetilde{N}(dr,dz),
\end{eqnarray*}
and
\begin{eqnarray*}
\tilde{A}^{n+1}(s)&=&\ln a+\int^s_0\tilde{\al}(X^n,\mu^{X^n,A^n},u)dr+\int^s_0\be^\top(X^n,\mu^{X^n,A^n},u) dW(r)\\
&&{\blue{\ \ \ \ \ +\int_0^s\int_{U_0} \big(\ln(1+\gamma(X^n,u,z))-\gamma(X^n,u, z)\big)\nu(dz)dr}}\\
&&{\blue{\ \ \ \ \ +\int_0^s\int_{U_0}\ln((1+\gamma(X^n,u, z)))\tilde{N}(dr,dz)}},
\end{eqnarray*}
where $\tilde{A}(s)=\ln A(s)$ and  $\tilde{\al}=\al-\frac{1}{2}\be^\top \be.$ Then,
\begin {eqnarray} \label{*}
&&\EE\sup_{r\le s}|X^{n+1}(r)-X^n(r)|^2\nonumber\\
&\le&K\Big(\EE\int^s_0\big|b(X^n,\mu^{X^n,A^n},u)-b(X^{n-1},\mu^{X^{n-1},A^{n-1}},u)\big|^2dr
\nonumber\\
&&\ \ \ +\EE\int^s_0\big|\si(X^n,\mu^{X^n,A^n},u)-\si(X^{n-1},\mu^{X^{n-1},A^{n-1}},u)\big|^2dr\nonumber\\
&&\ \ \ +\EE\int^s_0\int_{U_0}\big|\eta(X^n, u,z)-\eta(X^{n-1},  u,z)\big|^2\nu(dz)dr\Big)\nonumber\\
&\le&K\int^s_0\(\EE|X^n-X^{n-1}|^2+\rho(\mu^{X^n,A^n},\mu^{X^{n-1},A^{n-1}})^2\)dr.
\end{eqnarray}
For any $\delta\in(0,1)$, Definition \ref{DE2HA}  and Lemma \ref{lem1}   imply
\begin{eqnarray*}
&&\rho(\mu^{X^n,A^n}(r),\mu^{X^{n-1},A^{n-1}}(r))\\
&=&\sup_{f\in\LL_{Lip}(\RR^d)}\left|\<\mu^{X^n,A^n}(r),f\>-\<\mu^{X^{n-1},A^{n-1}}(r),f\>\right|\\
&=&\sup_{f\in\LL_{Lip}(\RR^d)}\left|\EE\Big(A^n(r)f(X^n(r))-A^{n-1}(r)f(X^{n-1}(r))\Big)\right|\\
&\le&K\EE\big(|A^n(r)-A^{n-1}(r)|(1+|X^n(r)|)\big)+K\EE\(A^{n-1}(r)|X^n(r)-X^{n-1}(r)|\)\\
&\le&K\(\EE|A^n(r)-A^{n-1}(r)|^{1+\delta}\)^{\frac{1}{1+\delta}}\big(\EE|1+X^n(r)|^{1+\delta^{-1}}\big)^{\frac{\delta}{1+\delta}}\\
&&+K\big(\EE|X^n(r)-X^{n-1}(r)|^2\big)^{1/2},
\end{eqnarray*}
for all $r\in[0,s]$. By   Kunita's inequality, for $q'=1+\de^{-1}>2$, we have
\begin{eqnarray}\label{eqqq}
&&\EE\sup_{r\le s}|X^{n}(r)|^{q'}\nonumber\\
&\le& K |x|^{q'}+K\EE\int^s_0\Big(\big|b(0,\mu^{X^{n-1},A^{n-1}},u)\big|^{q'}dr\nonumber\\
&&+K\EE\int^s_0\big|b(X^{n-1},\mu^{X^{n-1},A^{n-1}},u)-b(0,\mu^{X^{n-1},A^{n-1}},u)\big|^{q'}dr\nonumber\\
&&+K\EE\int^s_0\big|\si(0,\mu^{X^{n-1},A^{n-1}}(r),u(r))\big|^{q'}dr\nonumber\\
&&+K\EE\int^s_0\big|\si(X^{n-1},\mu^{X^{n-1},A^{n-1}},u)-\si(0,\mu^{X^{n-1},A^{n-1}},u)\big|^{q'}dr\nonumber\\
&&+K\EE\Big(\int^s_0\int_{U_0}|\eta(0, u,z)|^2\nu(dz)^{{q'}/2}dr +\int^s_0\int_{U_0}|\eta(0, u, z)|^{q'}\nu(dz)dr\Big)\nonumber\\
&&+K\EE\Big(\int^s_0\int_{U_0}|\eta(X^{n-1},  u,z)-\eta(0, u,z)|^2\nu(dz)^{{q'}/2}dr\nonumber\\
&&\ \ \ \  \ \ \ \ +\int^s_0\int_{U_0}|\eta(X^{n-1},  u,z)-\eta(0,  u,z)|^{q'}\nu(dz)dr\Big)\nonumber\\
&\le&K+K\int_0^s\Big(1+\big(\sup_{f\in\LL_1(\RR^d)}\(\EE(f(X^{n-1})A^{n-1})\big)\)^{q'}+\EE|X^{n-1}|^{q'}\Big)dr\nonumber\\
&\le&K+K\int_0^s\EE|X^{n-1}|^{q'}dr,
\end{eqnarray}
 for any $n\in\NN^+$ and $r\in[0,s]$, where the last inequality follows from
 \begin{eqnarray*}
 \Big(\EE\big|(1+X^{n-1}(r))A^{n-1}(r)\big|\Big)^{q'}&\le&\EE\big(1+|X^{n-1}(r)|\big)^{q'}\big(\EE A^{n-1}(r)^{{q'}/({q'}-1)}\big)^{{q'}-1}\\
 &\le& K+ K\EE|X^{n-1}(r)|^{q'}.\end{eqnarray*}

By induction, we can prove that
\[\EE\sup_{r\le s}|X^{n}(r)|^{q'}\leq K e^{Ks}.\]
Continuing with (\ref{*}), we get 
\[\EE\sup_{r\le s}|X^{n+1}(r)-X^n(r)|^2\le K\int^s_0\Big(\EE|X^n-X^{n-1}|^2+\big(\EE|A^n-A^{n-1}|^{q}\big)^{\frac{2}{q}}\Big)dr,\]
 where $q=1+\delta$ with $\frac{1}{{q'}}+\frac{1}{q}=1$.  {\blue{Since $1+\ga\geq k>0$, we have $(1+\gamma)^{-1}\leq k^{-1}:=K$, which together with the mean value theorem yields}} 
 {\blue{ \begin{eqnarray*}
 &&\int_{U_0} |\ln(1+\gamma(X^{n}, u,z))-\ln(1+\gamma(X^{n-1}, u,z))|^p\nu(dz)\\
   & \leq& \int_{U_0}K^p|\gamma(X^{n}, u,z)-\gamma(X^{n-1}, u,z)|^p\nu(dz).
 \end{eqnarray*}}}

 Similarly, we get
\[\EE\sup_{r\le s}|\tilde{A}^{n+1}(r)-\tilde{A}^n(r)|^2\le K\int^s_0\(\EE|X^n-X^{n-1}|^2+\(\EE|A^n-A^{n-1}|^{q}\)^{\frac{2}{q}}\)dr.\]
Then,
\begin{eqnarray*}
&&\big(\EE\sup_{r\le s}|A^{n+1}(r)-A^n(r)|^{q}\big)^{\frac{2}{q}}\\
&\le&\(\EE\sup_{r\le s}(A^{n+1}(r)+A^n(r))^{q}|\tilde{A}^{n+1}(r)-\tilde{A}^n(r)|^{q}\)^{\frac{2}{q}}\\
&\le&\(\EE\sup_{r\le s}(A^{n+1}(r)+A^n(r))^{\frac{2q}{2-q}}\)^{\frac{2-q}{q}}
\EE\sup_{r\le s}|\tilde{A}^{n+1}(r)-\tilde{A}^n(r)|^2\\
&\le&K\EE\sup_{r\le s}|\tilde{A}^{n+1}(r)-\tilde{A}^n(r)|^2\\
&\le&K\int^s_0\(\EE|X^n-X^{n-1}|^2+\(\EE|A^n-A^{n-1}|^{q}\)^{\frac{2}{q}}\)dr.\end{eqnarray*}
Let
\[f^n(s)=\EE\sup_{r\le s}|X^{n+1}(r)-X^n(r)|^2+\(\EE\sup_{r\le s}|A^{n+1}(r)-A^n(r)|^{q}\)^{\frac{2}{q}}.\]
Then,
\begin{equation}\label{eq0613b}
f^n(s)\le K\int^s_0f^{n-1}(r)dr.\end{equation}
By iterating, we see that
\[f^n(T)\le K'\frac{(KT)^n}{n!},\]
which is summable. Hence, there exists process $(X,A)$ such that
\[\EE\sup_{r\le s}|X^n(r)-X(r)|^2+\(\EE\sup_{r\le s}|A^n(r)-A(r)|^{q}\)^{\frac{2}{q}}\to 0.\]
It is then easy to show that $(X,A)$ is a solution to (\ref{eq0613a}).

To prove the uniqueness, we take two solutions and denote the difference as $(Y,B)$. Let
\[g(s)=\EE\sup_{r\le s}|Y(r)|^2+\(\EE\sup_{r\le s}|B(r)|^{q}\)^{\frac{2}{q}}.\]
Similar to (\ref{eq0613b}), we get
\[g({\blue{s}})\le K\int^s_0g(r)dr,\]
and hence $g=0$. This yields the uniqueness.
\end{proof}

\section{Necessary condition}\label{sec4}
\setcounter{equation}{0}
\renewcommand{\theequation}{\thesection.\arabic{equation}}

In this section, we proceed to proving the necessary optimality condition for the weighted mean-field control problem. 
Assume $\bar{u}(\cdot)\in \mathcal{U}$ is optimal. 
Let $v(\cdot)$ be any  given process such that $\bar{u}(\cdot) + v(\cdot) \in \mathcal{U}$. Since $\cU$ is convex, the  control  
$u^\epsilon(\cdot)\equiv \bar{u}(\cdot)+\epsilon v(\cdot)$ is also in $\cU$, for  all $ \epsilon\in[0,1].$  Hence,  $J(\bar{u})\leq J(u^\epsilon)$. Let $\big(X^\epsilon(\cdot), A^\epsilon(\cdot)\big)$  be the state process with control $u^\ep(\cdot)$. In the following proof of the theorem, for  simplicity of notation, we write $\th=(\bar{X},\mu^{\bar{X}, \bar{A}},\bar{u})$, $\ka=(\bar{X},\bar{A}, \mu^{\bar{X}, \bar{A}},\bar{u})$, while $\th^\ep,\ \th^v$ and $\ka^v$ are defined similarly. 
We use superscript to denote the components of the vectors or matrices, e.g. $\si=(\si^1,\si^2,\cdots,\si^n)=(\si^{ij})_{d\times n}$.
Let
$\psi^i_x=\(\psi^i_{x^1},\psi^i_{x^2}, \cdot\cdot\cdot, \psi^i_{x^d}\)$, for $\psi=b, \si^j, \eta, \be$,\ $x=(x^1, x^2,\cdot\cdot\cdot, x^d)$ and $i=1,2, \cdot\cdot\cdot,d$.

Let $\phi$ be one of $b, \si^j, \al, \be^j$. 
The following estimate (\ref{eq0611g}) will be very useful.
For $\delta\in(0,1)$, let $q'=1+\delta^{-1}, q=1+\delta$, $p\geq2$ and $\frac{1}{p}+\frac{1}{p'}=1$.  Let $\mu^\ep=\mu^{X^\ep, A^\ep}$
and $\vartheta^\ep(s)$ between $\th^\ep(s)$ and $\th(s)$. Then,
\begin{eqnarray}\label{eq0611g}
&&\qquad \big|\<\mu^\ep(s)-\bar\mu(s),\phi_\mu(\vartheta^\ep(s);\cdot)\>\big|\\
&=&\left|\EE'\Big(\big(A^\ep(s)'-\bar{A}(s)'\big)\phi_\mu(\vartheta^\ep(s); X^\ep(s)')+\bar{A}(s)'\phi_{\mu,1}(\vartheta^\ep(s); \tilde{X}(s)')\big(X^\ep(s)'- \bar{X}(s)'\big)\Big)\right|\nonumber\\
&\le&\big(\EE'|A^\ep(s)'-\bar{A}(s)'|^{q}\big)^{\frac{1}{q}}\big(\EE'|1+X^\ep(s)'|^{q'}\big)^{\frac{1}{q'}}\nonumber\\
&&+(\EE'|\bar{A}(s)'\phi_{\mu,1}(\vartheta^\ep(s); \tilde{X}(s)')|^{p'})^{\frac{1}{p'}}(\EE|X^\ep(s)'-\bar{X}(s)'|^p\big)^{{\frac{1}{p}}}\nonumber\\
&\le&K\(\EE'|A^\ep(s)'-\bar{A}(s)'|^{q}\)^{\frac{1}{q}}+K\(\EE'|X^\ep(s)'-\bar{X}(s)'|^p\)^{\frac{1}{p}},\nonumber
\end{eqnarray}
 for any $s\in[0,T]$,  where $\tilde{X}(s)$ is between $\bar{X}(s)$ and $X^\ep(s)$, and the last inequality used  (\ref{eqqq}).
 
 We  now present a few lemmas.
 
\begin{lemma}
There exists a constant $K$ such that  for any $p\geq 2$ and $\delta\in(0,1)$,  
\begin{equation}\label{eq0610a}
\EE{\blue{\sup_{s\in[0,T]}}}|X^\ep(s)-\bar{X}(s)|^p+\big(\EE{\blue{\sup_{s\in[0,T]}}}|A^\ep(s)-\bar{A}(s)|^{q}\big)^{\frac{p}{q}}\le K\ep^p,\end{equation}
where $q=1+\delta$. 
\end{lemma}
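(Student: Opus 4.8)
The plan is to prove the $L_{p,q}$-estimate by a Gronwall-type argument on the function
\[
g^\ep(s)\equiv\EE\sup_{r\le s}|X^\ep(r)-\bar X(r)|^p+\Big(\EE\sup_{r\le s}|A^\ep(r)-\bar A(r)|^{q}\Big)^{p/q},
\]
mirroring the Picard iteration used in Section~\ref{sec3}, but now tracking the explicit dependence on $\ep$. First I would write out the equations for $X^\ep-\bar X$ and for $\tilde A^\ep-\bar{\tilde A}$ (with $\tilde A=\ln A$, $\tilde\al=\al-\tfrac12\be^\top\be$), so that the $A$-component is handled through its logarithm exactly as in Lemma~\ref{lem1} and the existence proof; the uniform moment bound $\EE\sup_{t\le T}A(t)^p\le K$ from Lemma~\ref{lem1} (applied to both $A^\ep$ and $\bar A$, uniformly in $\ep$ since the coefficients are bounded) lets one pass between $L_q$-estimates of $A^\ep-\bar A$ and $L_2$-estimates of $\tilde A^\ep-\bar{\tilde A}$ via the inequality $|A^\ep-\bar A|\le(A^\ep+\bar A)|\tilde A^\ep-\bar{\tilde A}|$ and H\"older, just as in the chain of displays preceding \eqref{eq0613b}.

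Next I would estimate each term. For the $X$-equation, applying the Burkholder--Davis--Gundy and Kunita inequalities to the drift, Brownian and compensated-Poisson integrals, and using the Lipschitz bounds \eqref{condition1} on $b,\si$ in $(x,\mu)$, the bound \eqref{condition2} on $\eta$, and the fact that the control enters only through $u^\ep-\bar u=\ep v$ with bounded $u$-derivatives of the coefficients, one gets
\[
\EE\sup_{r\le s}|X^\ep(r)-\bar X(r)|^p\le K\int_0^s\Big(\EE|X^\ep-\bar X|^p+\rho(\mu^\ep,\bar\mu)^p\Big)dr+K\ep^p\,\EE\!\int_0^T|v(r)|^p dr'.
\]
Here the last term is the source of the $\ep^p$: since $v$ is only assumed $L^2$, one must be slightly careful — I would split off the contribution of $u^\ep-\bar u$ using the boundedness of $\phi_u$ and, if $p>2$, absorb it by first establishing \eqref{eq0610a} for $p=2$ and then bootstrapping, or simply note that by the structure of the perturbation the $v$-term can be estimated by $K\ep^p\EE(\int_0^T|v|^2dr)^{p/2}$ after using boundedness of the derivatives — this is a routine but slightly delicate point. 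The Wasserstein term $\rho(\mu^\ep,\bar\mu)$ I would bound exactly as in the existence proof: $\rho(\mu^\ep(r),\bar\mu(r))\le K(\EE|A^\ep-\bar A|^q)^{1/q}(\EE|1+X^\ep|^{q'})^{1/q'}+K(\EE|X^\ep-\bar X|^2)^{1/2}$, and the moment bound $\EE\sup_{r\le s}|X^\ep(r)|^{q'}\le Ke^{Ks}$ (which follows from \eqref{eqqq} applied with $u^\ep$ in place of $u^{n-1}$, uniformly in $\ep$) makes the first factor's prefactor a constant, so $\rho(\mu^\ep,\bar\mu)^p\le K(\EE|A^\ep-\bar A|^q)^{p/q}+K(\EE\sup|X^\ep-\bar X|^2)^{p/2}$, and when $p\ge2$ the $L_2$-quantity is controlled by the $L_p$-one. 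The $A$-side is analogous, working with $\tilde A^\ep-\bar{\tilde A}$ and using \eqref{condition3} together with $1+\ga\ge k>0$ and the mean value theorem to convert $\ln(1+\ga)$-differences into $\ga$-differences, exactly as in the blue display of Section~\ref{sec3}.

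Combining the two estimates gives $g^\ep(s)\le K\int_0^s g^\ep(r)\,dr+K\ep^p$, and Gronwall's inequality yields $g^\ep(T)\le K\ep^p$, which is \eqref{eq0610a}. The main obstacle I anticipate is the bookkeeping around the non-Lipschitz term $a\mapsto a\al(x,\mu)$ (and $a\be$, $a\ga$): one cannot estimate $|A^\ep\al(\th^\ep)-\bar A\,\al(\th)|$ directly in a way that closes a Gronwall inequality in $L_p$, which is precisely why the proof must route the $A$-dynamics through $\tilde A=\ln A$ and pay for it with the moment bound of Lemma~\ref{lem1} and a H\"older split with the auxiliary exponent $\tfrac{2q}{2-q}$; verifying that all these exponents are consistent (namely $\tfrac1{q'}+\tfrac1q=1$, $q=1+\de\in(1,2)$, $\tfrac1p+\tfrac1{p'}=1$ with $p\ge2$) and that every constant is independent of $\ep$ is where the care is needed. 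A secondary subtlety is the interplay between the $L^2$-only integrability of $v$ and the desired $\ep^p$ rate, handled by the bootstrap from $p=2$ mentioned above.
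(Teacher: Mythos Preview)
Your proposal is correct and follows essentially the same route as the paper: pass to $\tilde A=\ln A$, estimate $\EE\sup|X^\ep-\bar X|^p$ and $\EE\sup|\tilde A^\ep-\tilde A|^p$ via BDG/Kunita, convert back to $(\EE\sup|A^\ep-\bar A|^q)^{p/q}$ using Lemma~\ref{lem1} and H\"older, and close with Gronwall. Two small remarks. First, the paper expands the coefficient differences by the mean value theorem at an intermediate point $\vartheta^\ep$ (display \eqref{eq0611g}) rather than via the Lipschitz-in-$\mu$ bound and $\rho(\mu^\ep,\bar\mu)$ as you suggest; the two are equivalent here and lead to the same inequality. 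Second, for general $p\ge2$ the H\"older split you need is with exponent $\tfrac{pq}{p-q}$ on the $A$-moment (not $\tfrac{2q}{2-q}$, which is the $p=2$ special case from Section~\ref{sec3}); the paper carries this out via $A^\ep-\bar A=\zeta^\ep(\tilde A^\ep-\tilde A)$ with $\zeta^\ep$ between $A^\ep$ and $\bar A$, which is the same as your bound $|A^\ep-\bar A|\le(A^\ep+\bar A)|\tilde A^\ep-\bar{\tilde A}|$. Your observation about the $L^2$-only integrability of $v$ versus the $\ep^p$ rate is well taken; the paper simply writes $+\,\ep^p$ in \eqref{eq0611h} (implicitly using $\EE\int_0^T|v|^p\,ds<\infty$), so your bootstrap-from-$p=2$ suggestion is a cleaner way to justify this step under the stated hypotheses.
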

\begin{proof}
By  Hypothesis 2.1 and (\ref{eq0611g}), for any $p\geq 2,$ $s\in[0,T]$, we  obtain
\begin{eqnarray}\label{eq061100}
&&|b(\th^\ep(s))-b(\th(s))|^p\nonumber\\
&=&\big|b_x(\vartheta^\ep(s))(X^\ep(s)-\bar{X}(s))+\<\mu^\ep(s)-\mu(s),b_\mu(\vartheta^\ep(s);\cdot)\>+\ep b_u(\vartheta^\ep(s))v(s)\big|^p\nonumber\\
&\leq& K\Big(|X^\ep(s)-\bar{X}(s)|^p+\(\EE|A^\ep(s)-\bar{A}(s)|^{q}\)^{\frac{p}{q}}+\EE|X^\ep(s)-\bar{X}(s)|^p\nonumber\\
&&\ \ \  +\ep^p|b_u(\vartheta^\ep(s))v(s)|^p\Big).
\end{eqnarray}
The same estimate holds for $|\si^j(\th^\ep(s))-\si^j(\th(s))|^p$.

We then estimate $\int_{U_0}\big|\eta(X^\ep(s),u^\ep(s),z)-\eta(\bar{X}(s),\bar{u}(s),z)\big|^p\nu(dz)$. Note that
\begin{eqnarray}\label{eq0611005}
&&\int_{U_0}\big|\eta(X^\ep(s),u^\ep(s),z)-\eta(\bar{X}(s),\bar{u}(s),z)\big|^p\nu(dz)\\
&=&\int_{U_0}\big|\eta_x(\chi^\ep(s),z)(X^\ep(s)-\bar{X}(s))+\ep \eta_u(\chi^\ep(s),z)v(s)\big|^p\nu(dz)\nonumber\\
&\leq& K\(|X^\ep(s)-\bar{X}(s)|^p\int_{U_0}|\eta_x(\chi^\ep(s),z)|^p\nu(dz)+\ep^p|v(s)|^p\int_{U_0}|\eta_u(\chi^\ep(s),z)|^p\nu(dz)\),\nonumber
\end{eqnarray}
where $\chi^\ep(s)$ is between $(X^\ep(s),u^\ep(s))$ and $(\bar{X}(s),\bar{u}(s))$. Then,   Kunita's inequality  implies 
\begin{eqnarray*}
&&\EE \blue{\sup_{s\in[0,T]}}|X^\ep(s)-\bar{X}(s)|^p\nonumber\\
&\le&K\Big(\EE\int^{\blue{T}}_0|b(\th^\ep)-b(\th)|^pds+\sum _{i=1}^n\EE\int^{\blue{T}}_0|\si^j(\th^\ep)-\si^j(\th)|^pds\nonumber\\
&&\ \ \ \  +\EE\int^{\blue{T}}_0\int_{U_0}|\eta(X^\ep,u^\ep,z)-\eta(\bar{X},\bar{u},z)|^2(\nu(dz))^{p/2}ds\nonumber\\
&&\ \ \ \ +\EE\int^{\blue{T}}_0\int_{U_0}|\eta(X^\ep,u^\ep,z)-\eta(\bar{X},\bar{u},z)|^p\nu(dz)ds\Big).\nonumber\\
\end{eqnarray*}
Combined with  (\ref{eq061100}) and (\ref{eq0611005})  we get
\begin{eqnarray}\label{eq0611h}
&&\EE\blue{\sup_{s\in[0,T]}}|X^\ep(s)-\bar{X}(s)|^p\nonumber\\
&\le&K\int^{\blue{T}}_0\(\EE|X^\ep-\bar{X}|^p+\(\EE|A^\ep-\bar{A}|^{q}\)^{\frac{p}{q}}+\ep^p\)ds\nonumber\\
&\le& K \int^{\blue{T}}_0\Big(\EE\sup_{r\in[0,s]}|X^\ep(r)-\bar{X}(r)|^p+\big(\EE\sup_{r\in[0,s]}|A^\ep(r)-\bar{A}(r)|^{q}\big)^{\frac{p}{q}}+\ep^p\Big)ds.
\end{eqnarray}

Define  $\tilde{A}^\ep(s)=\ln A^\ep(s)$ and $\tilde{A}(s)=\ln \bar{A}(s)$, we have 
\begin{eqnarray}\label{eq0611005A}
&&\EE{\blue{\sup_{s\in[0,T]}}}|\tilde{A}^\ep(s)-\tilde{A}(s)|^p\nonumber\\
&\le&K\Big(\EE\int^{\blue{T}}_0|\tilde{\al}(\th^\ep)-\tilde{\al}(\th)|^pds+\EE\int^{\blue{T}}_0|\be(\th^\ep)-\be(\th)|^pds\nonumber\\
&&\ \  {\blue{+\EE\int_0^T\int_{U_0}\big|\ln(1+\ga( X^\ep, u^\ep, z)-\ln(1+\ga(\bar{X}, \bar{u}, z)))\big|^p\nu(dz)ds}}\nonumber\\
&&\ \  {\blue{+\EE\int_0^T\int_{U_0}\big|\ga( X^\ep, u^\ep, z)-\ga(\bar{X}, \bar{u}, z)\big|^p\nu(dz)ds}}\Big)\nonumber\\
&\le& K\int^{\blue{T}}_0\(\EE|X^\ep-\bar{X}|^p+\(\EE|A^\ep-\bar{A}|^{q}\)^{\frac{p}{q}}+\ep^p\)ds,
\end{eqnarray}
where the last inequality follows from (\ref{eq061100}) with $b$ replaced by  $\tilde{\al}$, $\be$ and  {\blue{$\gamma$}}, respectively. By LaGrange's intermediate value theorem,  there exists $\ze^\ep(s)$ between $A^\ep(s)$ and $\bar{A}(s)$ such that 
\[A^\ep(s)-\bar{A}(s)=\ze^\ep(s)(\tilde{A}^\ep(s)-\tilde{A}(s)).\]
Then,
\begin{eqnarray*}
\big(\EE{\blue{\sup_{s\in[0,T]}}}|A^\ep(s)-\bar{A}(s)|^{q}\big)^{\frac{p}{q}}&=&\Big(\EE{\blue{\sup_{s\in[0,T]}}}|\ze^\ep(s)(\tilde{A}^\ep(s)-\tilde{A}(s))|^q\Big)^{\frac{p}{q}}\nonumber\\
&\leq&\big(\EE{\blue{\sup_{s\in[0,T]}}}|{\ze^\ep(s)}|^{\frac{pq}{p-q}}\big)^{\frac{p-q}{q}} \EE{\blue{\sup_{s\in[0,T]}}}|\tilde{A}^\ep(s)-\tilde{A}(s)|^p.
\end{eqnarray*}
Lemma \ref{lem1}  implies that $\EE {\blue{\sup_{s\in[0,T]}}}|{\ze^\ep(s)}|^{\frac{pq}{p-q}}\leq K$, for all $s\in[0,T]$.  Hence 
\begin{eqnarray}\label{eq0611i}
&&\big(\EE{\blue{\sup_{s\in[0,T]}}}|A^\ep(s)-\bar{A}(s)|^{q}\big)^{\frac{p}{q}}\nonumber\\
&\le& K\int^{\blue{T}}_0\Big(\EE|X^\ep-\bar{X}|^p+\big(\EE|A^\ep-\bar{A}|^{q}\big)^{\frac{p}{q}}+\ep^p\Big)ds\nonumber\\
&\blue{\le}&\blue{ K \int^{\blue{T}}_0\Big(\EE\sup_{r\in[0,s]}|X^\ep(r)-\bar{X}(r)|^p+\big(\EE\sup_{r\in[0,s]}|A^\ep(r)-\bar{A}(r)|^{q}\big)^{\frac{p}{q}}+\ep^p\Big)ds}.
\end{eqnarray}

Denote the left hand side  of (\ref{eq0610a}) by $f(\blue{T})$. Adding inequalities (\ref{eq0611h}, \ref{eq0611i}), we obtain
\[f({\blue{T}})\le K\int^{\blue{T}}_0 f({\blue{s}})ds+K\ep^p.\]
The desired conclusion follows from Gronwall's inequality.
\end{proof}

Define $Y^\ep(s)=\ep^{-1}\(X^\ep(s)-\bar{X}(s)\)$, $B^\ep(s)=\ep^{-1}(A^\ep(s)-\bar{A}(s))$  and  $\tilde{B}^\ep(s)=\ep^{-1}(\tilde{A}^\ep(s)-\tilde{A}(s))$. Then,
\begin{eqnarray}{\label{eq0611f}}
dY^\ep(s)&=&\ep^{-1}\big(b(\th^\ep)-b(\th)\big)ds+\ep^{-1}\big(\si(\th^\ep)-\si(\th)\big)dW(s)\nonumber\\
&&+\ep^{-1}\int_{U_0}\big(\eta(X^\ep, u^\ep,z)-\eta(\bar{X}, \bar{u},z)\big)\widetilde{N}(ds,dz)\nonumber\\
&=&\Big(b_x(\vartheta^\ep)Y^\ep+b_u(\vartheta^\ep)v +\EE'\big({B^\ep}' b_\mu(\vartheta^\ep; {X^\ep}')+\bar{A}'b_{\mu,1}(\vartheta^\ep; \tilde{X}'){Y^\ep}'\big)\Big)
ds\nonumber\\
&&+\sum_{j=1}^n\Big(\si_x^j(\vartheta^\ep)Y^\ep+\si^j_u(\vartheta^\ep)v+\EE'\big({B^\ep}' \si^j_\mu(\vartheta^\ep; {X^\ep}')+\bar{A}'\si^j_{\mu,1}(\vartheta^\ep; \tilde{X}'){Y^\ep}'\big)
\Big)dW^j(s)\nonumber\\
&&+\int_{U_0}\Big(\eta_x(\chi^\ep, z)Y^\ep+\eta_u(\chi^\ep ,z)v
\Big)\widetilde{N}(ds,dz),
\end{eqnarray}
and 
\begin{eqnarray}{\label{eq0611B'}}
d\tilde{B}^\ep(s)&=&\ep^{-1}\big(\tilde{\al}(\th^\ep)-\tilde{\al}
(\th)\big)dt+\ep^{-1}\big(\be(\th^\ep)-\be(\th)\big)^\top dW(s)\nonumber\\
&&{\blue{+\ep^{-1}\int_{U_0} \big(\ln(1+\gamma(X^\ep, u^\ep,z))-\ln(1+\gamma(\bar{X},\bar{u},z))\big)\nu(dz)ds}}\nonumber\\
&&{\blue{-\ep^{-1}\int_{U_0} \big(\gamma(X^\ep, u^\ep,z)-\gamma(\bar{X},\bar{u},z)\big)\big)\nu(dz)ds}}\nonumber\\
&&\blue{+\ep^{-1}\int_{U_0} \big(\ln(1+\gamma(X^\ep, u^\ep,z))-\ln(1+\gamma(\bar{X},\bar{u},z)\big)\tilde{N}(ds,dz)}\nonumber\\
&=&\Big(\tilde{\al}_x(\vartheta^\ep)Y^\ep+\tilde{\al}_u(\vartheta^\ep)v
  +\EE'\big({B^\ep}' \tilde{\al}_\mu(\vartheta^\ep; {X^\ep}')+\bar{A}'\tilde{\al}_{\mu,1}(\vartheta^\ep; \tilde{X}'){Y^\ep}'\big)
\Big)ds\nonumber\\
&&+\sum_{j=1}^n\Big(\be_x^j(\vartheta^\ep)Y^\ep+\be^j_u(\vartheta^\ep)v +\EE'\big({B^\ep}' \be^j_\mu(\vartheta^\ep; {X^\ep}')+\bar{A}'\be^j_{\mu,1}(\vartheta^\ep; \tilde{X}'){Y^\ep}'\big)
\Big)dW^j(s)\nonumber\\
&&+\int_{U_0}\Big(\frac{1}{1+\zeta^\ep}\big( \gamma_x(\chi^\ep,z)Y^\ep+\gamma_u(\chi^\ep,z)v \big)- \big(\gamma_x(\chi^\ep,z) Y^\ep+\gamma_u(\chi^\ep,z)v \big)\Big)\nu(dz)ds\nonumber\\
&&+\int_{U_0}\frac{1}{1+\zeta^\ep}\big( \gamma_x(\chi^\ep,z)Y^\ep+\gamma_u(\chi^\ep,z)v \big)\tilde{N}(ds,dz),
\end{eqnarray}
where  $\zeta^\ep(s)$ between $\gamma(X^\ep(s), u^\ep(s), z)$ and $\gamma(\bar{X}(s), \bar{u}(s), z)$,  and $\chi^\ep(s)$ is between $(X^\ep(s),u^\ep(s))$ and $(\bar{X}(s),\bar{u}(s))$, $s\in[0,T]$. 

Consider the following mean-field SDEs 
\begin{equation}{\label{B1}}
\begin{split}
  \begin{aligned}
dY(s)&=\Big(b_x(\th)Y+b_u(\th)v +\EE'\big(B'b_{\mu}(\th; \bar{X}')+\bar{A}'b_{\mu,1}(\th; \bar{X}')Y'\big)\Big)ds\\
&+\sum_{j=1}^n\Big(\si_x^j(\th)Y+\si_u^j(\th)v+\EE'\(B'\si_{\mu}^j(\th; \bar{X}')+\bar{A}'\si_{\mu,1}^j(\th; \bar{X}')Y'\)\Big)dW^j(s)\\
&+\int_{U_0}\big(\eta_x(\bar{X}, \bar{u},z)Y+\eta_u(\bar{X}, \bar{u},z)v\big)\widetilde{N}(ds,dz),\\
 \end{aligned}
  \end{split}
\end{equation}
and 
\begin{equation}{\label{B2}}
\begin{split}
  \begin{aligned}
d\tilde{B}(s)&=\Big(\tilde{\al}_x(\th)Y+\tilde{\al}_u(\th)v +\EE'\big(B'\tilde{\al}_{\mu}(\th; \bar{X}')+\bar{A}'\tilde{\al}_{\mu,1}(\th; \bar{X}')Y'\big)\Big)ds\\
&+\sum_{j=1}^n\Big(\be^j_x(\th)Y+\be^j_u(\th)v  +\EE'\big(B'\be^j_{\mu}(\th; \bar{X}')+\bar{A}'\be^j_{\mu,1}(\th; \bar{X}')Y'\big)\Big)dW^j(s)\\
&{\blue{+\int_{U_0}\frac{1}{1+\gamma(\bar{X}, \bar{u},z)}\big(\gamma_x(\bar{X}, \bar{u},z)Y+\gamma_u(\bar{X}, \bar{u},z)v \big)\nu(dz)ds}}\nonumber\\
&{\blue{ -\int_{U_0}\big(\gamma_x(\bar{X}, \bar{u},z)Y +\gamma_u(\bar{X}, \bar{u},z)v \big)\nu(dz)ds}}\nonumber\\
&{\blue{+\int_{U_0}\frac{1}{1+\ga(\bar{X}, \bar{u},z)}\big(\gamma_x(\bar{X}, \bar{u},z)Y+\gamma_u(\bar{X}, \bar{u},z)v \big)\tilde{N}(ds,dz)}},
\end{aligned}
  \end{split}
\end{equation}
with $Y_0=\tilde{B}_0=0$, $B(s)=\bar{A}(s)\tilde{B}(s)$, $s\in[0,T]$. For  $v$ being fixed and any $ p\geq2$,  under Hypothesis 2.1 the  variational equations (\ref{B1}) and (\ref{B2}) admits a unique pair of solutions $(Y(s), \tilde{B}(s))$ with $\EE\big(|Y(s)|^p+|\tilde{B}(s)|^p|\big)\leq K.$

\begin{lemma}
Suppose that  Hypothesis  \ref{hyp} is satisfied.   
For any $p\geq2$ and $\delta\in(0,1)$, we have
\begin{eqnarray}{\label{L2}}
\lim_{\ep\downarrow 0}\(\EE\sup_{s\in[0,T]}|Y^\ep(s)-Y(s)|^p+\big(\EE\sup_{s\in[0,T]}|B^\ep(s)-B(s)|^{q}\big)^{\frac{p}{q}}\)=0,
\end{eqnarray}
where $q=1+\delta.$
\end{lemma}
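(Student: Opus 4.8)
The plan is a linearization (variational) estimate carried out inside the $L_{p,q}$ framework. Write $\de Y^\ep:=Y^\ep-Y$, $\de B^\ep:=B^\ep-B$, $\de\tilde B^\ep:=\tilde B^\ep-\tilde B$. I would subtract the variational equations (\ref{B1})--(\ref{B2}) from the equations (\ref{eq0611f})--(\ref{eq0611B'}) for $(Y^\ep,\tilde B^\ep)$, decompose every coefficient difference into a part that is linear in $(\de Y^\ep,\de B^\ep)$ with a bounded (resp.\ $\nu$-integrable) coefficient plus a ``frozen-coefficient'' remainder not involving the errors, and then close a Gronwall estimate for
\[\phi^\ep(t):=\EE\sup_{s\le t}|\de Y^\ep(s)|^p+\Big(\EE\sup_{s\le t}|\de B^\ep(s)|^{q}\Big)^{p/q}.\]
Applying Kunita's inequality to $\de Y^\ep$ and to $\de\tilde B^\ep$, and then the LaGrange substitution $\de B^\ep=\ze^\ep\,\de\tilde B^\ep+(\ze^\ep-\bar A)\tilde B$ (with $\ze^\ep$ between $A^\ep$ and $\bar A$, exactly as in Section~\ref{sec3}), together with H\"older and Lemma~\ref{lem1} to pass from $\de\tilde B^\ep$ back to $\de B^\ep$, one arrives at an inequality $\phi^\ep(t)\le K\int_0^t\phi^\ep(s)\,ds+R^\ep$, where $R^\ep$ collects all the remainders; Gronwall's inequality then yields $\phi^\ep(T)\le e^{KT}R^\ep$, so the whole proof reduces to showing $R^\ep\to0$ as $\ep\downarrow0$.

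For the linear part I would use the same split as in (\ref{eq061100}), e.g.\ $b_x(\vartheta^\ep)Y^\ep-b_x(\th)Y=b_x(\vartheta^\ep)\de Y^\ep+(b_x(\vartheta^\ep)-b_x(\th))Y$, the first term being linear in $\de Y^\ep$ with a bounded coefficient by Hypothesis~\ref{hyp} (parts (1), (3), (5)) and the second going into $R^\ep$; the jump terms are handled with the $L^p(\nu)$-bounds on $\eta_x,\eta_u,\ga_x,\ga_u$ from (\ref{condition2})--(\ref{condition3}) and the bound $(1+\ze^\ep)^{-1}\le k^{-1}$ from part (4). The measure terms are the delicate ones: writing
\[\EE'\big({B^\ep}'b_\mu(\vartheta^\ep;{X^\ep}')-B'b_\mu(\th;\bar X')\big)=\EE'\big(({B^\ep}'-B')\,b_\mu(\vartheta^\ep;{X^\ep}')\big)+\EE'\big(B'\,(b_\mu(\vartheta^\ep;{X^\ep}')-b_\mu(\th;\bar X'))\big),\]
the first term is bounded, exactly as in (\ref{eq0611g}), by $K(\EE|\de B^\ep|^{q})^{1/q}$ using $|b_\mu(\cdot;x')|\le K(1+|x'|)$ and the uniform moment bound $\sup_\ep\EE\sup_s|X^\ep(s)|^{q'}<\infty$ coming from (\ref{eqqq}); the $b_{\mu,1}$-term is similar but easier since $b_{\mu,1}$ is bounded. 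The same split applies with $b$ replaced by $\si$, $\tilde\al$, $\be$.

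To establish $R^\ep\to0$: every remainder is of the schematic form $\EE\int_0^T\big|(D^\ep(s)-D(s))Z(s)\big|^{p}\,ds$, or its $\EE'$-averaged / $\nu$-integrated analogue, where $D^\ep$ is a derivative coefficient evaluated at an interpolation point ($\vartheta^\ep$, $\chi^\ep$, $\tilde{X}$, or $\ze^\ep$), $D$ is the same coefficient at the frozen point, and $Z$ ranges over $Y,\tilde B,v,\bar A$, all in $L^p$. By the bound (\ref{eq0610a}) the interpolation points converge to their frozen values in $L^p$, hence, along any given $\ep_n\downarrow0$, a.s.\ after passing to a subsequence; the derivatives of $b,\si,\al,\be,\eta,\ga$ are continuous and obey the uniform growth and $\nu$-integrability bounds of Hypothesis~\ref{hyp}, and $\sup_\ep\EE\sup_s(|Y^\ep(s)|^p+|B^\ep(s)|^{q})<\infty$ again by (\ref{eq0610a}); dominated convergence (in $\omega$, in $s$, and against $\nu(dz)$, the dominating functions furnished by these uniform bounds) then sends each remainder to $0$ along that subsequence, and since the original sequence was arbitrary, $R^\ep\to0$. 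The LaGrange substitution contributes the extra term $\big(\EE\sup_s|(\ze^\ep(s)-\bar A(s))\tilde B(s)|^{q}\big)^{p/q}$, which vanishes because $\sup_s|\ze^\ep(s)-\bar A(s)|\le\sup_s|A^\ep(s)-\bar A(s)|\to0$ in every $L^r$ (Lemma~\ref{lem1} bounding all moments of $\ze^\ep$, and (\ref{eq0610a}) giving the convergence), while $\tilde B\in\bigcap_{r\ge2}L^r$.

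I expect the main obstacle to be the mean-field remainder terms. Since the weight-error $\de B^\ep$ is only controlled in $L^{q}$ with $q=1+\de$ close to $1$, whereas $b_\mu$ and $\si_\mu$ grow linearly in the spatial variable, one is forced to use the H\"older split of (\ref{eq0611g}) with the conjugate pair $(q,q')$ and to lean on the uniform estimate $\sup_\ep\EE\sup_s|X^\ep(s)|^{q'}<\infty$; keeping straight which quantities live in $L^p$ (the states $X^\ep$ and the processes $Y^\ep$, $\tilde B^\ep$) and which only in $L^{q}$ (the weights $A^\ep$, $B^\ep$), and ensuring that the Gronwall loop ever feeds the weight-errors back only through the $(\EE|\cdot|^{q})^{p/q}$ term, is the crux of the argument.
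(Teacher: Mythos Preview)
Your plan is correct and matches the paper's proof almost verbatim: the paper also sets $Z^\ep=Y^\ep-Y$, $\tilde C^\ep=\tilde B^\ep-\tilde B$, performs exactly the coefficient splitting you describe (its formula (\ref{bR2})), bounds the mean-field pieces via the same $(q,q')$-H\"older split as in (\ref{eq0611g}), collects the frozen-coefficient remainders into error terms $\de_1^\ep,\de_2^\ep,\de_3^\ep$ that tend to zero, and closes a Gronwall loop on $\EE\sup|Z^\ep|^p+(\EE\sup|C^\ep|^q)^{p/q}$.

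The one place you diverge is in passing from $\de\tilde B^\ep$ back to $\de B^\ep$. You use the mean-value identity $\de B^\ep=\ze^\ep\,\de\tilde B^\ep+(\ze^\ep-\bar A)\tilde B$; the paper instead writes the exact second-order Taylor remainder
\[
C^\ep=\bar A\,\tilde C^\ep+\Delta^\ep,\qquad \Delta^\ep=\tfrac12\,\bar A\,\ep^{-1}e^{\lambda(\tilde A^\ep-\tilde A)}(\tilde A^\ep-\tilde A)^2,
\]
and shows directly $\EE\sup|\Delta^\ep|^p\le K\ep^p$ using $\EE\sup|\tilde A^\ep-\tilde A|^{4p}\le K\ep^{4p}$ from (\ref{eq0611005A}). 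Both routes work: your remainder $(\ze^\ep-\bar A)\tilde B$ vanishes for the reason you give, while the paper's Taylor form has the mild advantage of making the $O(\ep)$ order of the remainder explicit and of keeping the coefficient in front of $\tilde C^\ep$ equal to the fixed weight $\bar A$ rather than the $\ep$-dependent $\ze^\ep$ (though the latter causes no trouble since its moments are uniformly bounded).
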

\begin{proof}
Applying It\^o's formula to $\bar{A}(s)\tilde{B}(s)$, we have
\begin{eqnarray*}
dB(s)&=&\bar{A}\Big(\tilde{\al}_x(\th)Y+\tilde{\al}_u(\th)v +\EE'\big(B'\tilde{\al}_{\mu}(\th; \bar{X}')+\bar{A}'\tilde{\al}_{\mu,1}(\th; \bar{X}')Y'\big)\Big)ds\\
  &&+\sum_{i=1}^n\bar{A}\Big(\be^j_x(\th)Y+\be^j_u(\th)v  +\EE'\big(B'\be^j_{\mu}(\th; \bar{X}')+\bar{A}'\be^j_{\mu,1}(\th; \bar{X}')Y'\big)\Big)dW^j(s)\\
&&+\Big(\tilde{B}\big(\bar{A}\al(\th)ds+\bar{A}\sum_{j=1}^n\be^j(\th)dW^j(s)\big)+\tilde{B}\int_{U_0}\bar{A}\ga(\bar{X}, \bar{u},z)\tilde{N}(ds,dz)\Big)\\
&&+\bar{A}\sum_{j=1}^n\be^j(\th)\Big(\be^j_x(\th)Y+\be^j_u(\th)v  +\EE'\big(B'\be^j_{\mu}(\th; \bar{X}')+\bar{A}'\be^j_{\mu,1}(\th; \bar{X}')Y'\big)\Big)ds\\
&&{\blue{+ \int_{U_0}\bar{A}\Big(\ga_x(\bar{X}, \bar{u}, z)Y+\ga_u(\bar{X}, \bar{u},z)v \Big)\tilde{N}(ds,dz).}}
\end{eqnarray*}
Since $\tilde{\al}=\al-\frac{1}{2}\be^\top \be$, we get $\tilde{\al}_{h}=\al_h-\sum_{j=1}^n\be^j\be^j_h$ for $h=x,u,\mu$.   Thus,
 \begin{eqnarray}{\label{B2'}}
dB(s)   
&=&\Big(\bar{A}\big(\al_x(\th)Y+\al_u(\th)v\big)
 +\bar{A}\EE'\big(B'\al_{\mu}(\th; \bar{X}')+\bar{A}'\al_{\mu,1}(\th; \bar{X}')Y'\big)+B\al(\th)\Big)ds \nonumber\\
&&+\sum_{j=1}^n\Big(\bar{A}\big(\be^j_x(\th)Y+\be^j_u(\th)v + \EE'(B'\be^j_{\mu}(\th; \bar{X}')+\bar{A}'\be^j_{\mu,1}(\th; \bar{X}')Y')\big)+B\be^j(\th)\Big)dW^j(s)\nonumber\\
&&+\blue{ \int_{U_0}\Big(\bar{A}\big(\ga_x(\bar{X}, \bar{u},z)Y+\ga_u(\bar{X}, \bar{u},z)v\big)+B\gamma(\bar{X}, \bar{u},z)\Big)\tilde{N}(ds,dz)}. 
\end{eqnarray}

To prove (\ref{L2}), we set
\[Z^\ep(s)=Y^\ep(s)-Y(s)\mbox{ and }C^\ep(s)=B^\ep(s)-B(s).\]
Then (\ref{eq0611f}) and (\ref{B1}) imply
\[dZ^\ep(s)=b^\ep(s)ds+\sum_{j=1}^n\si^{\ep,j}(s)dW^j(s)+\int_{U_0}\eta^\ep(s-,z)\widetilde{N}(ds,dz),\]
where
\begin{eqnarray}{\label{bR1}} 
b^\ep(s)&=&b_x(\vartheta^\ep(s))Y^\ep(s)-b_x(\th(s))Y(s)+b_u(\vartheta^\ep(s))v(s)-b_u(\th(s))v(s)\nonumber\\
&&+\EE'\big(B^\ep(s)' b_\mu(\vartheta^\ep(s); X^\ep(s)')-B(s)'b_{\mu}(\th(s); \bar{X}(s)')\big)\nonumber\\
&&+\EE'\big(\bar{A}(s)'b_{\mu,1}(\vartheta^\ep(s); \tilde{X}(s)')Y^\ep(s)'-\bar{A}(s)'b_{\mu,1}(\th(s); \bar{X}(s)')Y(s)'\big).
\end{eqnarray}
We also have similar representations for $\si^{\ep,j}(s)$ and $\eta^\ep(s-,z)$.
We now continue (\ref{bR1}) with 
\begin{eqnarray}{\label{bR2}} 
b^\ep(s)&=&b_x(\vartheta^\ep(s))\big(Y^\ep(s)-Y(s)\big)+\big(b_x(\vartheta^\ep(s))-b_x(\th(s))\big)Y(s)\nonumber\\
&&+\big(b_u(\vartheta^\ep(s))-b_u(\th(s))\big)v(s)\nonumber\\
&&+\EE'\big(\(B^\ep(s)' -B(s)'\)b_\mu(\vartheta^\ep(s); X^\ep(s)')\big)\nonumber\\
&&+\EE'\big(B(s)'\(b_\mu(\vartheta^\ep(s); X^\ep(s)')-b_{\mu}(\th(s); \bar{X}(s)')\)\big)\nonumber\\
&&+\EE'\Big(\bar{A}(s)'b_{\mu,1}(\vartheta^\ep(s); \tilde{X}(s)')\(Y^\ep(s)'-Y(s)'\)\nonumber\\
&&\ \ \ +\bar{A}(s)'(b_{\mu,1}(\vartheta^\ep(s); \tilde{X}(s)')-b_{\mu,1}(\th(s); \bar{X}(s)'))Y(s)'\Big)\nonumber\\
&=&b_x(\vartheta^\ep(s))Z^\ep(s)+\de b_x(s)Y(s)+\de b_u(s)v(s)\nonumber\\
&&+\EE'\big(C^\ep(s)'b_\mu(\vartheta^\ep(s); X^\ep(s)')+\de b_\mu(s;x')B'(s)\big)\nonumber\\
&&+\EE'\(\bar{A}(s)'b_{\mu,1}(\vartheta^\ep(s); \tilde{X}(s)')Z^\ep(s)'+\de b_{\mu,1}(s;X')Y(s)'\bar{A}(s)'\),
\end{eqnarray}
where   $\tilde{X}(s)$ is between 
$X^\ep(s)$ and $\bar{X}(s)$ and 
\begin{eqnarray*}
\de b_x(s)&=&b_x(\vartheta^\ep(s))-b_x(\th(s))\to 0,\\
\de b_u(s)&=&b_u(\vartheta^\ep(s))-b_u(\th(s))\to 0,\\
\de b_\mu(s;\bar{X}')&=&b_\mu(\vartheta^\ep(s); X^\ep(s)')-b_{\mu}(\th(s); \bar{X}(s)'),\\
\de b_{\mu,1}(s;\bar{X}')&=&b_{\mu,1}(\vartheta^\ep(s); \tilde{X}'(s))-b_{\mu,1}(\th(s); \bar{X}(s)').
\end{eqnarray*}
From the boundedness of the $p$th moments of $(\bar{A}(s),\bar{X}(s),Y(s),B(s))$ for  any $p\geq2$, we have
\begin{eqnarray*}
&& \int_0^T\EE |b^\ep(s)|^pds\\
&\leq&K\int_0^T\Big(\EE|b_x(\vartheta^\ep(s))Z^\ep(s)|^p+\EE|\de b_x(s)Y(s)|^p+\EE|\de b_u(s)v(s)|^p\\
 &&\ \ \ +\EE\big(\EE'\big|C^\ep(s)'b_\mu(\vartheta^\ep(s); X^\ep(s)')\big|\big)^p+\EE\big(\EE'\big|\de b_\mu(s;\bar{X}')B'(s)\big|\big)^p\\
&&\ \ \ +\EE\big(\EE'\big|\bar{A}(s)'b_{\mu,1}(\vartheta^\ep(s); \tilde{X}(s)')Z^\ep(s)'\big|\big)^p+\EE\big(\EE'\big|\de b_{\mu,1}(s;\bar{X}')Y(s)'\bar{A}(s)'\big|\big)^p\Big)ds\\
 &\leq& K\int_0^T\Big(\EE|Z^\ep(s)|^p+\EE|\de b_x(s)Y(s)|^p+\EE|\de b_u(s)v(s)|^p\\
 &&\ \ \ +\big(\EE'|C^\ep(s)'|^{q}\big)^{\frac{p}{q}}\big(\EE'|1+X^\ep(s)'|^{\frac{q}{q-1}}\big)^{\frac{p(q-1)}{q}}+\EE\big(\EE'|\de b_\mu(s;\bar{X}')|^p\big)(\EE'|B'(s)|^\frac{p}{p-1})^{p-1}\\
 &&\ \ \ +\EE\big(\EE'|\bar{A}(s)'b_{\mu,1}(\vartheta^\ep(s);\tilde{X}'(s))|^{\frac{p}{p-1}}\big)^{p-1}\EE'|Z^\ep(s)'|^p\\
 &&\ \ \ +\big(\EE'|\bar{A}(s)'Y(s)'|^{\frac{p}{p-1}}\big)^{p-1}\EE\big(\EE'|\de b_{\mu,1}(s;\bar{X}')|^p\big)\Big)ds\\
 &\leq&  K\int_0^T\Big(\EE|Z^\ep(s)|^p+\big(\EE|C^\ep(s)|^{q}\big)^{\frac{p}{q}}+\EE|\de^\ep_1(s)|^p\Big)ds,
\end{eqnarray*}
 where $q=1+\delta$ with $\delta\in (0,1)$   and 
\[\de^\ep_1(s)^p=:|\de b_x(s)Y(s)|^p+|\de b_u(s)v(s)|^p+\EE'|\de b_\mu(s;\bar{X}')|^p+\EE'|\de b_{\mu,1}(s;\bar{X}')|^p.\]
From the expressions for $\de b_x(s)$, $\de b_u(s)$, $\de b_\mu(s;\bar{X}')$ and $\de b_{\mu,1}(s;\bar{X}') $ and the Lipschitz property of $b_\mu$, it is clear that
\[\EE\int^T_0|\de^\ep_1(s)|^pds\to 0,\] 
as $\ep\downarrow 0$. Similar estimate holds for $\int_0^T\EE|\si^{\ep,j}(s)|^pds$. Further, 
\begin{eqnarray*}
&&\EE\int_0^T\int_{U_0}|\eta^\ep(s,z)|^p\nu(dz)ds\\
&\leq&K\EE\int_0^T\int_{U_0}\Big(|\eta_x(\chi^\ep(s),z)Y^\ep(s)-\eta_x(\bar{X}(s), \bar{u}(s),z)Y(s)|^p\\
&&+|\eta_u(\chi^\ep(s),z)-\eta_u(\bar{X}(s), \bar{u}(s),z)|^p|v(s)|^p\Big)\nu(dz)ds\\
&\leq&K\EE\int_0^T\int_{U_0}\Big(|\eta_x(\chi^\ep(s),z)|^p|Y^\ep(s)-Y(s)|^p+|\eta_x(\eta_x(\chi^\ep(s),z)-\eta_x(\bar{X}(s), \bar{u}(s),z)|^p|Y(s)|^p\\
&&+|\eta_u(\chi^\ep(s),z)-\eta_u(\bar{X}(s), \bar{u}(s),z)|^p|v(s)|^p\Big)\nu(dz)ds\\
&=&K\EE\int_0^T|Z^\ep(s)|^p\Big(\int_{U_0}|\eta_x(\chi^\ep(s),z)|^p\nu(dz)\Big)ds\\
&&+K\EE\int_0^T|Y(s)|^p\Big(\int_{U_0}|\eta_x(\chi^\ep(s),z)-\eta_x(\bar{X}(s), \bar{u}(s),z)|^p\nu(dz)\Big)ds\\
&&+K\EE\int_0^T|v(s)|^p\Big(\int_{U_0}|\eta_u(\chi^\ep(s),z)-\eta_u(\bar{X}(s), \bar{u}(s),z)|^p\nu(dz)\Big)ds\\
&\leq&K\int_0^T\big(\EE|Z^\ep(s)|^p+\EE|\delta_2^\ep(s)|^p\big)ds,
\end{eqnarray*}
where
\begin{eqnarray*}
\delta_2^\ep(s)^p&:=&|Y(s)|^p\Big(\int_{U_0}|\eta_x(\chi^\ep(s),z)-\eta_x(\bar{X}(s), \bar{u}(s),z)|^p\nu(dz)\Big)\\
&&+|v(s)|^p\Big(\int_{U_0}|\eta_u(\chi^\ep(s),z)-\eta_u(\bar{X}(s), \bar{u}(s),z)|^p\nu(dz)\Big).
\end{eqnarray*}
We also have
\begin{eqnarray*}
&&\EE\int^T_0|\delta_2^\ep(s)|^pds \\
&\leq& \int_0^T(\EE|Y(s)|^{2p})^{\frac{1}{2}}\(\EE\(\int_{U_0}|\eta_x(\chi^\ep(s),z)-\eta_x(\bar{X}(s), \bar{u}(s),z)|^p\nu(dz)\)^2\)^\frac{1}{2}ds\\
&&+\int_0^T(\EE|v(s)|^{2p})^{\frac{1}{2}}\(\EE\(\int_{U_0}|\eta_u(\chi^\ep(s),z)-\eta_u(\bar{X}(s), \bar{u}(s),z)|^p\nu(dz)\)^2\)^\frac{1}{2}ds\\
&\to&0, 
\end{eqnarray*}
as $\ep\downarrow 0$. Applying   BDG  inequality and Kunita's inequality again, we see that
\begin{eqnarray}\label{eq0611a}
&&\EE\sup_{s\in[0,T]}|Z^\ep(s)|^p\nonumber\\
&\leq&K\Big(\int_0^T\EE\big |b^\ep(s)\big|^pds+\sum_{j=1}^n\int_0^T\EE\big|\si^{\ep,j}(s)\big|^pds\nonumber\\
&&+\EE\int_0^T\int_{U_0}\big|\eta^\ep(s,z)\big|^2(\nu(dz))^{p/2}ds+\EE\int_0^T\int_{U_0}\big|\eta^\ep(s,z)\big|^p\nu(dz)ds
\Big)\nonumber\\
&\leq& K\int^T_0\(\EE|Z^\ep(s)|^p+(\EE|C^\ep(s)|^{q})^{\frac{p}{q}}+\EE(|\de^\ep_1(s)|^p+|\de^\ep_2(s)|^p)\)ds.
\end{eqnarray}

Recalling $\tilde{A}(s)=\ln \bar{A}(s)$, $\tilde{A}^\ep(s)=\ln A^\ep(s)$ and $B(s)=\bar{A}(s)\tilde{B}(s)$, we denote
\[\tilde{C}^\ep(s)=\ep^{-1}(\tilde{A}^\ep(s)-\tilde{A}(s))-\tilde{B}(s).\]
 Similar to above, we have
\[\EE\sup_{s\in[0,T]}|\tilde{C}^\ep(s)|^p\le K\int^T_0\(\EE|Z^\ep(s)|^p+(\EE|C^\ep(s)|^{q})^{\frac{p}{q}}+\EE|\de_3^\ep(s)|^p\)ds,\]
where $|\de_3^\ep(s)|^p$ is similar to $|\de^\ep_1(s)|^p+|\de^\ep_2(s)|^p$ with coefficients $b,\ \si,\ \eta$ replaced by $\tilde{\al},\ \be$.

Since $\tilde{A}^\ep(s)-\tilde{A}(s)=\ep(\tilde{C}^\ep(s)+\tilde{B}(s))$, we have
\begin{eqnarray*}
  C^\ep(s)&=&B^\ep(s)-B(s)\\
  &=&\ep^{-1}(A^\ep(s)-\bar{A}(s))-\bar{A}(s)\tilde{B}(s)\\
  &=&\ep^{-1}\bar{A}(s)\(\frac{A^\ep(s)}{\bar{A}(s)}-1-\ep\tilde{B}(s)\)\\
  &=&\ep^{-1}\bar{A}(s)\(e^{(\ln A^\ep(s)-\ln \bar{A}(s))}-1-\ep\tilde{B}(s)\)\\
  &=&\ep^{-1}\bar{A}(s)\(e^{(\tilde{A}^\ep(s)-\tilde{A}(s))}-1-\ep\big(\tilde{B}(s)+\tilde{C}^\ep(s)\big)\)+\bar{A}(s)\tilde{C}^\ep(s)\\
&\equiv & \Delta^\ep(s)+\bar{A}(s)\tilde{C}^\ep(s),
\end{eqnarray*}
for all $s\in[0,T]$. Note that
\begin{eqnarray*}
\Delta^\ep(s)
&=&\bar{A}(s)\ep^{-1}\(e^{\tilde{A}^\ep(s)-\tilde{A}(s)}-1-(\tilde{A}^\ep(s)-\tilde{A}(s))\)\\
&=&\frac{1}{2}\bar{A}(s)\ep^{-1}e^{\lambda(\tilde{A}^\ep(s)-\tilde{A}(s))}(\tilde{A}^\ep(s)-\tilde{A}(s))^2,
 \end{eqnarray*}
 where $\lambda\in(0,1).$ Since Lemma 4.1 and (\ref{eq0611005A}) imply that
 $\EE|\tilde{A}^\ep(s)-\tilde{A}(s)|^p<K\ep^p$ and 
$$\EE|\bar{A}(s)e^{\lambda(\tilde{A}^\ep(s)-\tilde{A}(s))}|^p=\EE|\bar{A}(s)^{1-\lambda}(A^\ep(s))^\lambda|^p=(\EE|\bar{A}(s)|^{2p(1-\lambda)})^\frac{1}{2}(\EE|A^\ep(s)|^{2p\lambda})^\frac{1}{2}\leq K,$$ 
for any $p\geq 2$,  it is clear that 
\begin{eqnarray*}
\EE\sup_{s\in[0,T]}|\Delta^\ep(s)|^p&\leq& K\ep^{-p}\int_0^T\big(\EE|\bar{A}(s)e^{\lambda(\tilde{A}^\ep(s)-\tilde{A}(s))}|^{2p}\big)^{\frac{1}{2}}\big(\EE|\tilde{A}^\ep(s)-\tilde{A}(s)|^{4p}\big)^{\frac{1}{2}}ds\\
&\leq& K\ep^p\to 0,
 \end{eqnarray*}
 as  $\ep\downarrow 0$.  Thus,
\begin{eqnarray}\label{eq0611b}
\big(\EE\sup_{s\in[0,T]}|C^\ep(s)|^{q}\big)^{\frac{p}{q}} &\le& K\Big(\big (\EE\sup_{s\in[0,T]}|\bar{A}(s)\tilde{C}^\ep(s)|^{q}\big)^{\frac{p}{q}}+\big(\EE\sup_{s\in[0,T]}|\Delta^\ep(s)|^{q}\big)^{\frac{p}{q}}\Big)\nonumber\\
&\le& K\Big(\EE\sup_{s\in[0,T] }|\tilde{C}^\ep(s)|^p+\big(\EE\sup_{s\in[0,T]}|\Delta^\ep(s)|^{q}\big)^{\frac{p}{q}}\Big)\nonumber\\
&\le&K\int^T_0\Big(\EE|Z^\ep(s)|^p+(\EE|C^\ep(s)|^{q})^{\frac{p}{q}}+\sum^3_{i=1}\EE|\de^\ep_i(s)|^p\Big)ds\nonumber\\
&&+K\big(\EE\sup_{s\in[0,T]}|\Delta^\ep(s)|^p).
\end{eqnarray}
The conclusion follows by (\ref{eq0611a}, \ref{eq0611b}).
\end{proof}

{\bf{Proof of the Theorem \ref{smp}}}:
Note that as $\ep\rightarrow0$ 
\begin{eqnarray}\label{eq0611c}
&0\leq&\lim_{\ep\downarrow0}\ep^{-1}\(J(u^\ep)-J(u)\)=\frac{d}{d\ep}J(u+\ep(v-u))|_{\ep=0}\nonumber\\
&\to&\EE\int^T_0\Big(f_x(\ka)Y+f_a(\ka)B +\EE'\big(B'f_\mu(\ka; \bar{X}')+A'f_{\mu,1}(\ka; \bar{X}')Y'\big)+f_u(\ka)v\Big)ds\nonumber\\
&&+\EE\big(\<\Phi_x(\bar{X}(T),\bar{A}(T)),Y(T)\>+\<\Phi_a(\bar{X}(T),\bar{A}(T)),B(T)\>\big).\nonumber\\
\end{eqnarray} 
Denote
\begin{eqnarray}{\label{eq0613adjointa3''}}
g&=&p\al(\th)+\be^{\top}(\th)q+\blue{\int_{U_0}r(z)\ga(\bar{X}, \bar{u},z)\nu(dz)}+f_a(\ka)+\EE'(f_\mu(\ka'; \bar{X}))\nonumber\\
&&+\mathbb{E}'\Big(\big(b_{\mu}^{\top}P'+\sum_{j=1}^n(\si^j_{\mu})^{\top}{Q^j}'+\bar{A}'(p'\al_{\mu}+\be_{\mu}^{\top}q')\big)(\th';\bar{X})\Big),\nonumber\\
 \end{eqnarray}
 and 
 \begin{eqnarray}{\label{eq0613adjointa4''}}
G&=&b^{\top}_x(\th)P+\sum_{j=1}^n\(\si^j_x(\th)\)^{\top} Q^j+\int_{U_0}\eta_x^{\top}(\bar{X},\bar{u}, z)R(z)\nu(dz)+f_x(\ka)\nonumber\\
&&+\bar{A}\(p\al_x^{\top}(\th)+q\be_x^{\top}(\th))+\blue{\int_{U_0}r(z)\ga_x(\bar{X}, \bar{u},z)\nu(dz)}\)+\EE'\big(\bar{A}f_{\mu,1}^{\top}(\ka';\bar{X})\big)\nonumber\\
&&+\mathbb{E}' \Big(\bar{A}\big(b_{\mu,1}^{\top}P'+\sum_{j=1}^n(\si^j_{\mu,1})^{\top}{Q^j}'+\bar{A}'\big(\al_{\mu,1}^{\top}p'+\be_{\mu,1}^{\top}q'\big)\big)(\th';\bar{X})\Big).
 \end{eqnarray}
  Then,
  \begin{equation*}
\left \{
  \begin{aligned}
   dp(s)=&-g(s)ds+q(s) dW(s)+\int_{U_0}r(s-,z)\widetilde{N}(ds,dz),&\\
   p(T)=&\Phi_a(\bar{X}(T),\bar{A}(T)),&
      \end{aligned}
  \right.
\end{equation*}
and 
\begin{equation*}
\left \{
  \begin{aligned}
   dP(s)=&-G(s)ds+Q(s) dW(s)+\int_{U_0}R(s-,z)\widetilde{N}(ds,dz),&\\
   P(T)=&\Phi_x(\bar{X}(T),\bar{A}(T)).&
      \end{aligned}
  \right.
\end{equation*}
Applying It\^o's formula to  $\<p(s),B(s)\>$ and $\<P(s),Y(s)\>$, we have 
\begin{eqnarray*}
&&d\<p(s),\ B(s)\>\\
&=&p(s-)dB(s)+B(s-)dp(s)+d[p,B](s)\\   
&=&p\Big(\bar{A}\big(\al_x(\th)Y+\al_u(\th)v\big)+B\al(\th) +\bar{A}\EE'\big(B'\al_{\mu}(\th; \bar{X}')+\bar{A}'\al_{\mu,1}(\th; \bar{X}')Y'\big)\Big)ds\\
&&+\sum_{j=1}^np\Big(\bar{A}\big(\be^j_x(\th)Y+\be^j_u(\th)v\big)+B\be^j(\th)+\bar{A}\EE'\big(B'\be^j_{\mu}(\th; \bar{X}')+\bar{A}'\be^j_{\mu,1}(\th; \bar{X}')Y'\big)\Big)dW^j(s)\\
&&\blue{+\int_{U_0}\Big(p\bar{A}\big(\ga_x(\bar{X}, \bar{u},z)Y+\ga_u(\bar{X}, \bar{u},z)v\big)+pB\ga(\bar{X}, \bar{u},z)\Big)\tilde{N}(ds,dz)}\\
&&-Bgds+Bq^{{\top}}dW(s)+B(s-)\int_{U_0}r(s-,z)\widetilde{N}(ds,dz)\\
&&+\sum_{j=1}^nq^j\Big(\bar{A}\big(\be_x^j(\theta)Y+\be_u^j(\theta)v\big)+B\be^j(\theta)+\bar{A}\EE'\big(B'\be^j_\mu(\theta;\bar{X}')+\bar{A}'\be_{\mu,1}^j(\theta;\bar{X}')Y'\big) \Big)ds\\
&&\blue{+\int_{U_0}r(s-,z)\Big(\bar{A}\big(\gamma_x(\bar{X}, \bar{u},z)Y+\gamma_u(\bar{X}, \bar{u},z)v \big)+ B\ga(\bar{X}, \bar{u},z)\Big)N(ds,dz)},
\end{eqnarray*}
and 
\begin{eqnarray*}
&&d\<P(s),\ Y(s)\>\\
&=&P(s-)dY(s)+Y(s-)dP(s)+d[P,Y](s)\\   
&=&\Big<P,\  b_x(\th)Y+b_u(\th)v
 +\EE'\big(B'b_{\mu}(\th; \bar{X}')+\bar{A}'b_{\mu,1}(\th; \bar{X}')Y'\big)\Big>ds\\
&&+\Big<P, \ \sum_{j=1}^n\big(\si_x^j(\th)Y+\si_u^j(\th)v
 +\EE'\big(B'\si_{\mu}^j(\th; \bar{X}')+\bar{A}'\si_{\mu,1}^j(\th; \bar{X}')Y'\big)\big)dW^j(s)\Big>\\
&&+\int_{U_0}\Big<P(s-),\  \big(\eta_x(\bar{X}, \bar{u},z)Y+\eta_u(\bar{X}, \bar{u},z)v
\big)\Big>\tilde{N}(ds,dz)\\
&&+\Big<Y(s-),\ -G(s)ds+Q^j(s)dW^j(s)+\int_{U_0}R(s-,z)\widetilde{N}(ds,dz)\Big>\\
&&+\sum_{j=1}^n\Big<Q^j, \ \si_x^j(\th)Y+\si_u^j(\th)v +\EE'\big(B'\si_{\mu}^j(\th; \bar{X}')+\bar{A}'\si_{\mu,1}^j(\th; \bar{X}')Y'\big)\Big>ds\\
&&+\int_{U_0}\Big<R(s-,z),\ \eta_x(\bar{X},\bar{u},z)Y+\eta_u(\bar{X}, \bar{u},z)v \Big>N(dz,ds).
\end{eqnarray*}
Hence,
\begin{eqnarray}\label{PB4}
&&\EE\big(\<p(T),\ B(T)\>+\<P(T),\ Y(T)\>\big)\nonumber\\
&=&\int_0^T\EE\big(d\<p(s),\ B(s)\>+d\<P(s),Y(s)\>\big)\nonumber\\
&=&\int_0^T\EE\Big(\Big<\bar{A}p\al_x^{{\top}}(\th)-G+ \bar{A}\EE'\big(\bar{A}'\al_{\mu_1}^{{\top}}(\th';\bar{X})p'+b_{\mu,1}^{{\top}}(\th';\bar{X})P'\big),\ Y\Big>\Big)ds\nonumber\\
&&+\int_0^T\EE\Big(\Big<\int_{U_0}\big(\blue{r(z)\bar{A}\gamma_x^\top(\bar{X}, \bar{u},z)}+\eta_x^{{\top}}(\bar{X}, \bar{u},z)R(z)\big)\nu(dz),\ Y\Big>\Big)ds\nonumber\\
&&+\int_0^T\EE\Big(\Big<\bar{A}\EE'\Big(\sum_{j=1}^n\bar{A}'(\be_{\mu,1}^j(\th';\bar{X}))^{{\top}}(q^j)'+\sum_{j=1}^n(\si_{\mu,1}^j(\th';\bar{X}))^{{\top}}(Q^j)'\Big),\ Y
\Big>\Big)ds\nonumber\\
&& +\int_{0}^T\EE\Big(\Big<b_x^{{\top}}(\th)P+\sum_{j=1}^n\(\si_x^j(\th))^{{\top}}Q^j+\bar{A}(\be_x^j(\th))^{{\top}}q^j\)
, \ Y\Big>\Big)ds\nonumber\\
&&+\int_0^T\EE\Big(\Big<p\al(\th)\blue{+\int_{U_0}r(z)\ga(\bar{X}, \bar{u},z)\nu(dz)}-g
, \ B\Big>\Big)ds\nonumber\\
&&+\int_0^T\EE\Big(\EE'\big(\bar{A}'p'\al_{\mu}(\th';\bar{X})+b_{\mu}^{{\top}}(\th';\bar{X})P'\big)
, \ B\Big>\Big)ds\nonumber\\
&&+\int_0^T\EE\Big(\Big<\sum_{j=1}^n\Big(q^j\be^j(\th)+\EE'\big((\si_{\mu}^j(\th';\bar{X}))^{{\top}}Q'^j +\bar{A}'q'^j\be^j_{\mu}(\th';\bar{X})\big)\Big), \ B\Big>\Big)ds\nonumber\\
&&+\int_0^T\EE\Big(\Big<\bar{A}p\al_u^\top(\th)+b_u^\top (\th) P+\sum_{j=1}^n\big(\bar{A}q^j(\be^j_u(\th))^\top+(\si_u^j(\th))^\top Q^j\big)\nonumber\\
&&\ \ \ \ +\int_{U_0}\big(\blue{r(z)\bar{A}\ga_{u}^\top(\bar{X}, \bar{u},z)}+\eta^\top _u(\bar{X}, \bar{u},z)R(z)\big)\nu(dz), \ v\Big>\Big)ds{\blue{.}}
\end{eqnarray}

Combining (\ref{eq0611c}), (\ref {eq0613adjointa3''}) (\ref{eq0613adjointa4''}) and (\ref{PB4}),  we can get 
\begin{eqnarray*}
0&\le&\EE\int^T_0\Big<\bar{A}\Big(p\al_u^\top(\th)+\be_u^\top (\th)q\blue{+\int_{U_0}r(z)\gamma_u^\top(\bar{X}, \bar{u},z)\nu(dz)}\Big)+b_u^\top (\th)P\\
&&\ \ \ \ \ \ \ \ +\sum_{j=1}^n(\si^j_u(\th))^\top Q^j+f_u^\top(\ka)+\int_{U_0}\eta_u^\top (\bar{X},\bar{u},z)R(z) \nu(dz),\  v\Big>ds.
\end{eqnarray*}
By the definition of Hamiltonian function $\cH$, we arrive at 
\begin{equation}\label{ZQH}
0\le\EE\int^T_0\big<\cH_{\blue{u}}\big(\bar{X},\bar{A},\mu^{\bar{X},\bar{A}},\bar{u},p,q,\blue{ r(\cdot)},P,Q,R(\cdot)\big), \ v \big>ds.
\end{equation}
\blue{The arbitrariness of $v$} then implies
\[\cH_{\blue{u}}(\bar{X}(s),\bar{A}(s),\mu^{\bar{X},\bar{A}}(s),\bar{u}(s),p(s),q(s), \blue{r(s,z)},P(s),Q(s),R(s,z))=0{\blue{,}}\]
for a.e. $s\in[0,T], \ z\in U_0$, $\PP$- a.s. This complete the proof of the Theorem \ref{smp}.
\qed

\section{Sufficient condition} \label{sec5}
\setcounter{equation}{0}
\renewcommand{\theequation}{\thesection.\arabic{equation}}

{\bf{Proof of the Theorem \ref{Sufficient}}}:
For any $v(\cdot)\in\cU$ with the corresponding controlled state process $(X^v(\cdot),A^v(\cdot)), $ the convexity of $\Phi$ implies
\begin{eqnarray*}
&&J(v)-J(\bar{u})\\
&\geq&\EE\Big(\big<\Phi_x(\bar{X}(T),\bar{A}(T)),\ X^v(T)-\bar{X}(T)\big>+\big<\Phi_a(\bar{X}(T),\bar{A}(T)), \ A^v(T)-\bar{A}(T)\big>\Big)\\
&&+\EE\left(\int_0^T \big(f(\kappa^v)-f(\kappa)\big)ds\right)\\
&=&\EE\Big(\big<P(T), \ X^v(T)-\bar{X}(T)\big>+\big<p(T), \ A^v(T)-\bar{A}(T)\big>\Big)\\
&&+\EE\left(\int_0^T \big(f(\kappa^v)-f(\kappa)\big)ds\right).
\end{eqnarray*}
By the definition of the Hamiltonian $\cH$ of (\ref{eq0611H}) and Equation  (\ref{hmdhx1}), we have
\begin{eqnarray}\label{SSMP2}
&&\EE\int_{0}^T\big(f(\kappa^v)-f(\kappa)\big)ds\nonumber\\
&=&\EE\int_{0}^T\big(\cH(k^v,p,q,\blue{r(\cdot)},P,Q,R(\cdot))-\cH(s)\big)ds\nonumber\\
&&-\EE\int_0^T\(\<p,  A^v\alpha(\theta^v)-\bar{A}\alpha(\theta)\>+\<q, A^v\beta(\theta^v(s))-\bar{A}\beta(\theta)\>\)ds\nonumber\\
&&-\blue{\EE\int_0^T\int_{U_0}\big<r(z), A^v\ga(X^v, v,z)-\bar{A}\ga(\bar{X},\bar{u},z)\big>\nu (dz)ds}\nonumber\\
&&-\EE\int_0^T\Big(\big<P,\  b(\theta^v)-b(\theta)\big>+ \tr[Q^\top \si(\th^v)]-\tr[Q^\top \si(\th)]\Big)ds\nonumber\\
&&-\EE\int_0^T\int_{U_0}\big<R(z),\  \eta(X^v, v,z)-\eta(\bar{X}, \bar{u},z )\big>\nu (dz)ds.
\end{eqnarray}
Recall that $(p(\cdot), q(\cdot), r(\cdot, \cdot))$ and $(P(\cdot), Q(\cdot), R(\cdot, \cdot))$ satisfy equations (\ref{eq0613adjointa3}) and (\ref{eq0613adjointa4}), respectively, applying It\^o's formula to $\<P(s),\ X^v(s)-\bar{X}(s)\>$ and $\<p(s),\ A^v(s)-\bar{A}(s)\>$ gives
 \begin{eqnarray}\label{SSMP3}
&&\EE\Big(\big<P(T), \ X^v(T)-\bar{X}(T)\big>+\big<p(T),\  A^v(T)-\bar{A}(T)\big>\Big)\nonumber\\
&=&\EE\int_0^T-\big<\cH_x(s)+\bar{A}\mathbb{E}'(\cH_{\mu,1}(s';\bar{X})),\ X^v-\bar{X}\big>ds\nonumber\\
&&-\EE\int_0^T\big<\cH_a(s)+\mathbb{E}'(\cH_\mu(s';\bar{X})),\ A^v-\bar{A}\big>ds\nonumber\\
&&+\EE\int_0^T\Big(\big<P,\  b(\theta^v)-b(\theta)\big>+\sum_{j=1}^n\big((\si^j(\th^v))^\top-(\si^j(\th))^\top \big)Q^j\Big)ds\nonumber\\
&&+\EE\int_0^T\int_{U_0}\big<R(z),\  \eta(X^v, v, z)-\eta(\bar{X}, \bar{u},z)\big>\nu (dz)ds\\
&&+\EE\int_0^T\(\<p,  A^v\alpha(\theta^v)-\bar{A}\alpha(\theta)\>+\<q, A^v\beta(\theta^v)-\bar{A}\beta(\theta)\>\)ds\nonumber\\
&&\blue{+\EE\int_0^T\int_{U_0}\big<r(z),\  A^v(s)\ga(X^v, v,z)-\bar{A}\ga(\bar{X}, \bar{u},z)\big>\nu (dz)ds}\nonumber.
\end{eqnarray}
Equations (\ref{SSMP2}) and (\ref{SSMP3}) together imply  
 \begin{eqnarray}\label{SSMP6}
J(v)-J(\bar{u})
&\geq&\EE\int_{0}^T\Big(\cH(\ka^v,p,q, r(\cdot),P,Q,R(\cdot))-\cH(s)\Big)ds\nonumber\\
&&-\EE\int_0^T\Big(\big<\cH_x(s),\ X^v(s)-\bar{X}(s)\big>+\big<\cH_a(s),\ A^v-\bar{A}\big>\Big)ds\\
&&-\EE\int_0^T\mathbb{E}' \Big(\big<\cH_\mu(s;\bar{X}'),\ {A^v}'-\bar{A}'\big>+\big<\bar{A}'\cH_{\mu,1}(s;\bar{X}'),\ {X^v}'-\bar{X}'\big>\Big)ds,\nonumber
\end{eqnarray}
where $\cH_{\mu}(s;\bar{X}')=\cH_{\mu}(\ka,p,q, r(\cdot),P,Q,R(\cdot);\bar{X}')$. Since $\cH$ is convex with respect to $(x,a,\mu,u)$, we get 
\begin{eqnarray}\label{SSMP4}
&&\cH\big(\ka^v(s),p(s),q(s), r(s,\cdot),P(s),Q(s),R(s,\cdot)\big)-\cH(s)\nonumber\\
&\geq&\big<\cH_x(s),\ X^v(s)-\bar{X}(s)\big>+\big<\cH_a(s),\ A^v(s)-\bar{A}(s)\big>+\big<\cH_u(s), \ v(s)-\bar{u}(s)\big>\nonumber\\
&&+\mathbb{E}'\Big(\big<\cH_{\mu}(s;\bar{X}'),\ A^v(s)'-\bar{A}(s)'\big>+\big<A'(s)\cH_{\mu,1}(s;\bar{X}'),\ {X^{v}(s)}'-\bar{X}(s)'\big>\Big).
\end{eqnarray}

 Taking expectation on both sides of (\ref{SSMP4}) and plugging into (\ref{SSMP6}), we have  
\begin{equation*}
J(v)-J(\bar u)\geq\EE\int_0^T\<\cH_u(s),\ v(s)-\bar{u}(s)\>ds=0,
\end{equation*}
which leads to the desired result.
\qed

\section{Three examples}\label{sec6}
\setcounter{equation}{0}
\renewcommand{\theequation}{\thesection.\arabic{equation}}

In this section, we present three examples, one for the motivation of this article, another for potential application, and the last for a solvable case of the
SMP obtained.

\begin{exmp}[Mean-variance portfolio selection problem]
We consider a market that consists of one risk-free asset and one risky asset.  The price of the risk-free asset $S_0(t)$ evolves according to the deterministic differential equation:
\[dS_0(t)=rS_0(t)dt, \ \ \ S_0(0)=s_0>0,\]
where $r>0$ is the risk-free interest rate. The asset prices do not evolve in isolation. Instead, they are often affected by the overall behavior of investors, such as total investment willingness and average wealth level. Therefore,  the other asset prices are often expressed as:
\[dS(t)=S(t)\(b(t,\mu(t))dt+\si(t, \mu(t))dW(t)+\int_{U_0}\eta(t,z)\tilde{N}(dt,dz)\),\ \ \ S(0)=s,\]
where $s>0$ is the initial price,   $b(\cdot)$ is the rate of return of the asset, $\si(\cdot)$ and $\eta(\cdot)$ are
 the volatilities of the asset and  $\mu(\cdot)$ is a term representing the average behavior of the investors, $\tilde{N}$ is a compensated 
 Poisson random measure on a measurable space $U_0$ describing the sudden change of the price due to some extreme events.  Then, the  wealth process of the $i$-th investor is given by:
\begin{eqnarray*}
dX_i(t)&=&X_i(t)[u_i(t)(b(t, \mu(t))-r)+r]dt+ X_i(t)u_i(t)\si(t, \mu(t))dW(t)\\
&&+\int_{U_0}X_i(t)u_i(t)\eta(t,z)\tilde{N}(dt dz),\qquad
i=1,2,\cdot\cdot\cdot\end{eqnarray*}
 where $u_i(\cdot)$ is the risky asset investment ratio of the $i$-th investor.  Traditionally, $\mu(\cdot)$  is modeled as the average of investor states:$$\mu(\cdot)=\lim_{N\to\infty}\frac{1}{N}\sum_{i=1}^N X_i(t),$$ which can be more generally expressed using the empirical distribution: 
\[\mu(\cdot)= \lim_{N\to\infty}\frac{1}{N}\sum_{i=1}^N \delta_{X_i(\cdot)},\]
where $\delta_{X_i(\cdot)}$  is the Dirac measure  at $X_i(\cdot)$, representing the distribution of the wealth across all investors.

However, in real markets, not all investors have the same influence on the market. Large-scale investors such as institutional fund managers exert significantly more influence on market trends compared to small-scale investors. Thus, market consensus or sentiment should be shaped by a wealth-weighted average of investor opinions, not a simple average. Accordingly, we consider the weighted empirical distribution, i.e. $$\mu(\cdot)=\lim_{N\to\infty}\frac{1}{N}\sum_{i=1}^N A_i(\cdot) \delta_{X_i(\cdot)}(t),$$
where $A_i(\cdot)>0 $  represents the weight (e.g., capital scale or influence) of the $i$-th investor.
 This reflects a more realistic depiction of financial markets: the influence of an investor should be proportional to the amount of capital they manage.

We drop the subscript $i$ and consider a typical investor with state equation
\begin{eqnarray*}
dX(t)&=&X(t)[u(t)(b(t, \mu(t))-r)+r]dt+ X(t)u(t)\si(t, \mu(t))dW(t)\\
&&+\int_{U_0}X(t)u(t-)\eta(t,z)\tilde{N}(dt dz),
\end{eqnarray*}
the weight
\[dA(t)=A(t)\(\al(t)dt+\be(t)dW(t)+\int_{U_0}\ga(t-,z)\tilde{N}(dt dz)\),\]
and the cost functional
\[J(u)=\EE\((X(T)-\EE(X(T))^2-\la \EE(X(T))\),\]
where $\la>0$ is a suitable constant, and
\[\mu(t)=\EE\(A(t)\de_{X(t)}\).\]
\end{exmp}

It is clear that the problem in the example above is a special case of the weighted mean-field control problem
we studied in previous sections and can be solved using the SMP we derived.

The next example demonstrate that the problem which is not weighted to begin with and need to be
converted to a weighted mean-field problem for its solution. For simplicity of notation, we do not consider the jump case.

\begin{exmp}[Mean-field control problem with partial information]
Consider a control problem with mean-field SDE as its state equation:
\[dX(t)=b\big(t,X(t),u(t),\EE (X(t))\big)dt+\si\big(t,X(t),u(t),\EE( X(t))\big)dW(t),\]
and cost functional
\[J(u)=\EE\(\int^T_0 f(t,X(t),u(t),\EE X(t))dt+\Phi(X(T))\).\]
Suppose that $X=X^u$ is not observable. Instead, we observe a process $Y^u$ governed by 
\[Y^u(t)=\int^t_0 h(s,X(s))ds+B(t),\]
where $(B,W)$ is a 2-dimensional Brownian motion. The control $u$ must be $\cF^Y_t$-adapted.

To break the circular dependence between the control $u$ and the observation $Y^u$, a change of measure technique is usually 
taken when $h$ is bounded. Namely, we define a probability measure $Q$ such that
\[\frac{dQ}{dP}=\exp\(\int^T_0 h(s,X(s))dB(s)-\frac12\int^T_0|h(s,X(s))|^2ds\)\equiv M(T).\]
Under $Q$, $(Y,W)$ is a Brownian motion. Thus, the original problem is converted to a weighted mean-field control problem with partial 
information with state
\[\left\{\begin{array}{ccl}
dX(t)&=&\tilde{b}(t,X(t),u(t),\mu(t))dt+\tilde{\si}(t,X(t),u(t),\mu(t))dW(t),\\
dM(t)&=&-M(t)h(t,X(t))dY(t),
\end{array}\right.\]
and cost functional
\[J(u)=\EE^Q\(\int^T_0f(t,X(t),u(t))M(t)dt+\Phi(X(T)M(T)\),\]
where 
\[\tilde{b}(t,x,u,\mu)]=b\(t,x,u,\frac{\int x\mu(dx)}{\mu(\RR)}\),\]
and $\tilde{\si}$ is defined similarly.
\end{exmp}

The problem above is a {\em conditional} weighted mean-field control problem. It can be solved by a {\em conditional} SMP which can
 be derived similar to the SMP we obtained in this article.

Finally, to demonstrate the application of the stochastic maximum principle, we consider the following solvable example.  To avoid overcomplicating the already notationally heavy presentation, we assume $d=n=k=1$.

\begin{exmp}[Linear quadratic case]\label{ex0509a}
 Consider the following dynamic system
\begin{equation}\label{exam}
\left\{\begin{array}{ccl}
dX(t)&=&\big(b_{11}X+b_{12}\<\mu, \iota\>+b_{13}u\big)dt +\big(\si_{11}X+\si_{12}\<\mu,  \iota\>+\si_{13}u\big)dW(t),\\
dA(t)&=&A(t)\big(\al dt+\be dW(t)\big),\\
X(0)&=&x,\ A(0)=a,\ t\in[0,T],
\end{array}\right.
\end{equation}
where $b_{1i}, \si_{1i}, i=1,2,3$ are bounded deterministic functions defined on $[0, T]$. Here, $x, a, \al$ and $\be $ are given constants and the function $ \iota(x)=x$.  Suppose that $b_{13}+\be\si_{13}=0$.

Furthermore, we consider the following cost functional
\begin{eqnarray}
  J(u) =\EE\Big(\int_0^T\big(R_1X^2(t)+R_2u^2(t)\big)dt+\Phi X(T)^2\Big), 
\end{eqnarray}
where $R_1\geq0, R_2>0$ and $\Phi$ are deterministic constants.
\end{exmp}

Note that $ \iota^*(\mu)\equiv\<\mu, \iota\>=\EE[AX]$. From Definition \ref{def2.2} of the weighted measure derivative, it follows that
$$ \iota^*(\mu+\ep\nu)- \iota^*(\mu)=\<\mu+\ep\nu, \iota\>-\<\mu,  \iota\>=\ep\<\nu, \iota\>.$$
Hence, $ \iota^*_\mu(\mu;x)=x$. In this case,  the Hamiltonian functional is given by
\begin{eqnarray*} 
&&\cH\big(x,a,\mu, u, p,q, P,Q)\nonumber\\
&\equiv&a\big(p\al+q\be\big)+P\big(b_{11}x+b_{12}\<\mu, \iota\>+b_{13}u\big)\nonumber\\
&&\ \ +Q\big(\si_{11}x+\si_{12}\<\mu, \iota\>+\si_{13}u\big)+R_1 x^2+R_2u^2,
\end{eqnarray*}
where $(p,q)$ and  $(P,Q)$ are solutions to the following equations
\begin{equation}\label{Exad1}
\left \{
\begin{split}
  \begin{aligned}
   dp(t)=&-\big\{p\al+q\be+\bar{X}\mathbb{E}[Pb_{12}+Q\si_{12}]\big\}dt+q(t) dW(t),&\\
   p(T)=&0,&
      \end{aligned}
  \end{split}
  \right.
\end{equation}
and 
\begin{equation}\label{Exad2}
\left \{
\begin{split}
  \begin{aligned}
   dP(t)=&-\big\{Pb_{11}+Q\si_{11}+2R_1 \bar{X}+A\EE[Pb_{12}+Q\si_{12}]\big\}dt+Q(t) dW(t),&\\
   P(T)=&2\Phi \bar{X}(T).&
      \end{aligned}
  \end{split}
  \right.
\end{equation}
By  Theorem \ref{smp},  our optimal control $\bar{u}$ satisfies the following stationarity conditions
\[Pb_{13}+Q\si_{13}+R_2 \bar{u}=0.\]
In other word,
\begin{equation}\label{eq0509a}\bar{u}(t)=-R_2^{-1}(Pb_{13}+Q\si_{13}),\end{equation}
where $(P, Q)$ is  the unique solution to the equation (\ref{Exad2}) and  $\bar{X}$ is the optimal state trajectory.

Applying It\^o's formula to $A(t)\bar{X}(t)$, we have 
\begin{equation*}
\left\{\begin{array}{ccl}
dA(t)\bar{X}(t)&=&\Big((\al+b_{11}+\si_{11}\be)A\bar{X}+(b_{12}+\si_{12}\be)A\EE[A\bar{X}]\Big)dt\\
 &&+\big( (\be+\si_{11})A\bar{X}+\si_{12}A\EE[A\bar{X}]+\si_{13}A\bar{u}\big)dW(t),\\
\bar{X}(0)A(0)&=&xa,\ t\in[0,T].
\end{array}\right.
\end{equation*}
Hence,
 \begin{equation*}
\left\{\begin{array}{ccl}
d\EE[A\bar{X}(t)]&=&\Big((\al+b_{11}+\si_{11}\be)\EE[A\bar{X}]+(b_{12}+\si_{12}\be)\EE[A]\EE[A\bar{X}]\Big)dt,\\
\EE[\bar{X}(0)A(0)]&=&xa,\ t\in[0,T].
\end{array}\right.
\end{equation*}
Note that $\EE(A)$ is clearly computable. Thus, we can solve $\EE(A\bar{X})$ explicitly. As a consequence, $(\bar{X},\ A,\ \bar{u},\ p,\ q,\ P,\ Q)$ can
 be solved explicitly from the linear FBSDE (\ref{exam}, \ref{Exad1}, \ref{Exad2}, \ref{eq0509a}).
 
{\color{blue} \begin{remark}
 A solvable example can also be provided for jump case with $W(\cdot)$ in the last example being replaced by a compensated
 Poisson process. We omit that due to the length of the paper.
 \end{remark}}

\section{Concluding remarks}\label{sec7}
\setcounter{equation}{0}
\renewcommand{\theequation}{\thesection.\arabic{equation}}

In this article, we have studied the optimal control problem
for weighted mean-field system with jumps. To overcome the difficulty caused by the non-Lipschitz continuity of the coefficients, 
we introduced an $L_{p,q}$-estimation technique for the proof of the existence and uniqueness of the solution to the system, as well as the expansion 
estimate associated with the convex perturbation of the optimal control. We also point out that in our definition of the directional derivative $f_\mu(\mu;x)$
of a functional of the measure, we do not need it to be bounded in $x$. Instead, we assumed it to be at most linear growth in $x$,
 and defined the Wasserstein distance accordingly (used the space $\LL_1(\RR^d)$). It is possible to relax this linear growth 
 condition to polynomial growth without much difficulty. We decide not to do so for the simplicity of notation. This is one of the main difference from \cite{T-X-2023}. This extension  substantially broaded the scope of the applications. For example, the easy and interesting linear case  considered in Example \ref{ex0509a} is not coverred by the results of \cite{T-X-2023} because $\iota^*_\mu(\mu,x)=x$ is not bounded.
 
 Another main difference from \cite{T-X-2023} is the inclusion of the jump terms. As we have indicated in the introduction,
  the study of the optimal control of the jump diffusion has attracted substantial research attention in the literature. The problems
   for jump-diffusions involving weighted mean-field interaction are certainly worthy of study. In fact, more delicated 
   estimates are needed in the jump case.
 
 Finally, we leave 
 the case with unbounded coefficients ($\al,\ \be,\ \ga$) for the weight equation as a challenging {\em open} problem.
  We hope to come back to this problem in our future research.

 \end{document}